\newtheorem*{bigtheo}{Théorème}
\author{Stéphane Bijakowski}
\address{Université Paris 13, Sorbonne Paris Cité,
LAGA,
CNRS, UMR 7539,
F-93430, Villetaneuse, France}
\email{bijakows@math.univ-paris13.fr}
\title{Classicité de formes modulaires de Hilbert}
\begin{document}

\frontmatter

\begin{abstract}
Nous prouvons un résultat de classicité pour les formes modulaires de Hilbert surconvergentes. Nous utilisons pour démontrer ce résultat la méthode du prolongement analytique, initialement développée par Buzzard et Kassaei.
\end{abstract}

\begin{altabstract}
We prove in this paper a classicality result for overconvergent Hilbert modular forms. To get this result, we use the analytic continuation method, first used by Buzzard and Kassaei.
\end{altabstract}

\maketitle

\tableofcontents

\mainmatter

\section*{Introduction}

Coleman (\cite{Co}) a prouvé qu'une forme modulaire surconvergente sur la courbe modulaire, de niveau iwahorique en $p$, de poids $k$ entier, propre pour un certain opérateur de Hecke $U_p$, était classique si la pente, c'est-à-dire la valuation de la valeur propre pour l'opérateur de Hecke (normalisée de telle façon que $v(p)=1$), était inférieure à $k-1$. Ce résultat a été obtenu grâce à une connaissance approfondie de la cohomologie rigide de la courbe modulaire. Des travaux de Buzzard (\cite{Bu}) et de Kassaei (\cite{Ka}), utilisant des techniques de prolongement analytique, ont donné une nouvelle démonstration de ce théorème. Ainsi, Buzzard a étudié la dynamique de l'opérateur de Hecke, et montré que ses itérés accumulaient le tube supersingulier dans un voisinage strict arbitrairement petit du lieu ordinaire multiplicatif. Une forme modulaire surconvergente étant définie sur un tel voisinage strict, l'équation $f = a_p^{-1} U_p f$ permet de prolonger $f$ au tube supersingulier dès que la pente est non nulle. \\
\indent La théorie du sous-groupe canonique a ensuite permis à Kassaei de décomposer l'opérateur de Hecke sur le lieu ordinaire-étale (et même sur un voisinage strict de celui-ci) en $U_p = U_p^{good} + U_p^{bad}$, où $U_p^{good}$ paramètre les supplémentaires du sous-groupe universel ne rencontrant pas le sous-groupe canonique, et $U_p^{bad}$ l'unique supplémentaire égal au sous-groupe canonique. L'opérateur $U_p^{good}$ est à valeurs dans un voisinage strict du tube ordinaire-multiplicatif, donc agit sur les formes modulaires surconvergentes, alors que $U_p^{bad}$ stabilise le lieu ordinaire-étale. Kassaei a alors défini une série $f_n$, approximant $a_p^{-n} U_p^n f$. Plus précisément, on a, lorsque cela a un sens, $f_n = a_p^{-n} U_p^n f - a_p^{-n} (U_p^{bad})^n f$. La condition $v(a_p) < k-1$ implique alors que la norme de l'opérateur $a_p^{-1} U_p^{bad}$ est strictement inférieure à $1$, et donc que la série $f_n$ converge sur le lieu ordinaire-étale. Cela permet donc d'étendre $f$ à toute la courbe modulaire. \\
\indent Ce résultat de classicité a été entendu aux formes modulaires de Hilbert par plusieurs auteurs. La méthode originale de Coleman, qui étudie la cohomologie de la courbe modulaire, a récemment été utilisée par Johansson (\cite{Jo}) ainsi que par Tian et Xiao (\cite{T-X}). La méthode du prolongement analytique a été utilisée par Sasaki dans le cas où le nombre premier $p$ est totalement décomposé (\cite{Sa}), Tian (\cite{Ti}) pour certains cas où $p$ est inerte, et Pilloni-Stroh (\cite{P-S 1}) pour le cas inerte général. Remarquons également que la méthode de prolongement analytique a été utilisée pour des variétés plus générales dans \cite{P-S 2} et dans \cite{Bi}, respectivement dans le cas totalement décomposé et inerte.

Nous étendons ici la méthode du prolongement analytique au cas général des formes modulaires de Hilbert. Soit $F$ un corps totalement réel de degré $d$, et $(p)=\prod \pi_i^{e_i}$ la décomposition de l'idéal engendré par $p$ dans $O_F$, l'anneau des entiers de $F$. On note également $f_i$ le degré résiduel de $\pi_i$. Soit $\Sigma_i = \{ \sigma \in $ Hom$(F,\mathbb{C}_p), v(\sigma(\pi_i)) > 0 \}$ ; les ensembles $\Sigma_i$ forment une partition de $\Sigma =$ Hom$(F,\mathbb{C}_p)$, et sont de cardinal $e_i f_i$. Le poids d'une forme modulaire de Hilbert est alors un caractère de Res$_{F / \mathbb{Q}} \mathbb{G}_m$, que l'on peut voir comme un élément de $\mathbb{Z}^{\Sigma}$. Nous avons alors le théorème suivant :

\begin{bigtheo}
Soit $f$ une forme de Hilbert surconvergente de niveau Iwahorique en $p$, de poids $\kappa=(k_\sigma)$, où $\sigma$ parcours l'ensemble $\Sigma$. Supposons que $f$ soit propre pour les opérateurs de Hecke $U_{\pi_i}$, de valeurs propres $a_i$, et que l'on ait pour tout $i$
$$e_i(v(a_i) + f_i) < \inf_{\sigma \in \Sigma_i} k_\sigma$$
Alors $f$ est classique.
\end{bigtheo}

Parlons à présent de l'organisation du texte. Dans la partie $\ref{varhilbert}$, nous introduisons la variété de Hilbert, ainsi que les formes modulaires sur cette variété. Dans la partie $\ref{hecke}$, nous introduisons les opérateurs de Hecke, et introduisons des décompositions de ces opérateurs sur certaines zones. Nous démontrons le théorème de classicité dans la partie $\ref{classicite}$, et la partie $\ref{compact}$ est consacrée à la construction de compactifications toroïdales, et au principe de Koecher.

\section{Variété et formes de Hilbert} \label{varhilbert}

\subsection{L'espace de modules}

Soit $F$ un corps totalement réel de degré $d \geq~2$ ; on note $O_F$ son anneau des entiers, $O_F^\times$ le groupe des unités et $O_F^{\times,+}$ le sous-groupe des unités totalement positives. Soit $p$ un nombre premier et $(p) = \prod_{i=1}^g\pi_i^{e_i}$ sa décomposition en idéaux premiers dans $F$. On notera $f_i$ le degré résiduel de $\pi_i$, et $\pi = \prod_i \pi_i$. Soit $F_{\pi_i}$ la complétion de $F$ suivant l'idéal $\pi_i$, et $K$ une extension finie de $\mathbb{Q}_p$ contenant la clôture normale des $F_{\pi_i}$. On notera $O_K$ l'anneau des entiers de $K$. Un schéma abélien de Hilbert-Blumenthal (SAHB) sur un schéma $S$ est un schéma abélien $A$ sur $S$ de dimension $d$ muni d'un plongement $O_F \hookrightarrow $ End$(A)$. Soit $N \geq 3$ un entier premier à $p$.
\begin{defi}
Soit $\delta$ la différente de $F$, $\mathfrak{c}$ un idéal fractionnaire de $F$. On note $\mathfrak{c}^+$ le cône des éléments totalement positifs. Soit $Y_{\mathfrak{c}} \to $ Spec $O_K$ l'espace de modules dont les $S$-points sont les classes d'isomorphismes des $(A,i,\phi,H)$ avec :
\begin{itemize}
\item $A \to S$ un SAHB.
\item $i : \delta^{-1} \otimes_{\mathbb{Z}} \mu_N \hookrightarrow A[N]$ est une structure de niveau $\mu_N$.
\item $\phi$ est une $\mathfrak{c}$-polarisation, c'est-à-dire que $\phi$ est un homomorphisme $O_F$-linéaire $\mathfrak{c} \to P(A)$, où $P(A)$ est l'ensemble des morphismes symétriques $f : A \to A^t$, tel que
\begin{itemize}
\item $\phi$ envoie $\mathfrak{c}^+$ dans le cône des polarisations
\item $\phi$ induit un isomorphisme $A \otimes \mathfrak{c} \simeq A^t$. 
\end{itemize}
\item $H$ est un sous-groupe de rang $p^f$ de $A[\pi]$ stable par $O_F$, avec $f=\sum_i f_i$, tel que $H[\pi_i]$ soit de rang $p^{f_i}$ pour tout $i$.
\end{itemize}
\end{defi}

	Comme $A[\pi] = \oplus_i A[\pi_i]$, on a $H = \oplus_i H_i$, avec $H_i = H [\pi_i]$. Pour tout $i$, $H_i$ est un sous-groupe de rang $p^{f_i}$ de $A[\pi_i]$, avec une action de $O_F / \pi_i \simeq \mathbb{F}_{p^{f_i}}$ : c'est un schéma en $\mathbb{F}_{p^{f_i}}$-vectoriel de rang $1$, c'est-à-dire un schéma en groupes de Raynaud. \\
D'après \cite{D-P}, on dispose d'un ouvert $Y_{\mathfrak{c}}^R \hookrightarrow Y_\mathfrak{c}$ qui est le lieu où le faisceau conormal $\omega_A$ du SAHB universel est un $O_F \otimes_\mathbb{Z} \mathcal{O}_{Y_\mathfrak{c}}$-module libre de rang $1$ (c'est le lieu de Rapoport). Le complémentaire de cet ouvert est un fermé de codimension plus grande que $2$ dans $Y_\mathfrak{c}$. \\
	Pour définir les formes modulaires entières de poids général, nous aurons besoin de modifier la fibre spéciale de $Y_\mathfrak{c}$. Nous nous inspirons de \cite{Sa2}. Rappelons que le faisceau conormal de $A$ en sa section unité est un $O_F \otimes _{\mathbb{Z}} \mathcal{O}_{Y_\mathfrak{c}}$-module. Commençons par décrire la $O_K$-algèbre $O_F \otimes_{\mathbb{Z}} O_K$. \\
Notons $F_{\pi_i}^{nr}$ l'extension maximale non ramifiée de $\mathbb{Q}_p$ contenue dans $F_{\pi_i}$. Soit également $\Sigma_i =Hom(F_{\pi_i},\overline{K})$ et $S_i = Hom(F_{\pi_i}^{nr},\overline{K})$. Soit $\varpi_i$ une uniformisante de $F_{\pi_i}$, et $E_i(u)$ le polynôme minimal de $\varpi_i$ sur $F_{\pi_i}^{nr}$ (c'est un polynôme d'Eisenstein de degré $e_i$). Pour $\sigma \in S_i$, on notera $E_\sigma (u) = \sigma E_i (u)$ ; c'est un polynôme à coefficients dans $O_K$. Alors on a 
$$O_F \otimes_\mathbb{Z} O_K = \bigoplus_{i=1}^g O_{F_{\pi_i}} \otimes_{\mathbb{Z}_p} O_K = \bigoplus_{i=1}^g (O_{F_{\pi_i}}^{nr} \otimes_{\mathbb{Z}_p} O_K) [u] / E_i(u) = \bigoplus_{i=1}^g \bigoplus_{\sigma \in S_i} O_K [u] / E_\sigma(u)$$
Si $S$ est un $O_K$-schéma, et $A \to S$ un SAHB, alors on peut décomposer le faisceau $\omega_A$ en 
$$\omega_A = \bigoplus_{i=1}^g \bigoplus_{\sigma \in S_i} \omega_{A,\sigma}$$
où $\omega_{A,\sigma}$ est un $\mathcal{O}_S$-module localement libre de rang $e_i$ muni d'une action de $O_K [u] / E_\sigma(u)$, pour tout $i$ et $\sigma \in S_i$. \\
Pour $\sigma \in S_i$, notons $\sigma_1, \dots, \sigma_{e_i}$ les éléments de $\Sigma_i$ dont la restriction à $F_{\pi_i}^{nr}$ est $\sigma$.

\begin{defi}
Soit $X_\mathfrak{c}$ l'espace de modules sur $O_K$ dont les $S$-points sont les couples\\ $(A,i,\phi,H,(\omega_{A,\sigma,j})_{i,\sigma \in S_i, 0 \leq j \leq e_i})$ avec
\begin{itemize}
\item $(A,i,\phi,H) \in Y_\mathfrak{c}(S)$
\item Pour tout $i$ et $\sigma \in S_i$, le faisceau $\omega_{A,\sigma}$ est muni de la filtration
$$0 = \omega_{A,\sigma,0} \subset \omega_{A,\sigma,1} \subset \dots \subset \omega_{A,\sigma,e_i} = \omega_{A,\sigma}$$
tel que 
\begin{itemize}
\item pour tout $j$, $\omega_{A,\sigma,j}$ est localement un $\mathcal{O}_S$-facteur direct stable par $O_F$ de $\omega_{A,\sigma}$ de rang $j$.
\item pour tout $1 \leq j \leq e_i$, l'action de $O_F$ sur  $\omega_{A,\sigma,j} / \omega_{A,\sigma,j-1}$, qui est un $\mathcal{O}_S$-module localement libre de rang $1$, se factorise par $O_F \to O_{F_{\pi_i}} \overset{\sigma_j}{\to}~O_K \to~\mathcal{O}_S$.
\end{itemize}
\end{itemize}
\end{defi}
Le foncteur $X_\mathfrak{c}$ est représentable par un schéma, que l'on notera encore $X_\mathfrak{c}$ (\cite{Sa2}). On dispose d'un morphisme d'oubli $X_\mathfrak{c} \to Y_\mathfrak{c}$, qui est surjectif. Au-dessus du lieu de Rapoport, c'est un isomorphisme ; en particulier, on a $X_\mathfrak{c} \otimes_{O_K} K \simeq Y_\mathfrak{c} \otimes_{O_K} K$. \\
Soit $Cl(F)^+$ le quotient des idéaux fractionnaires par les idéaux engendrés par les éléments totalement positifs, et $\{\mathfrak{c}_i\}$ un ensemble de représentants premiers à $p$. On note $X = \coprod_i X_{\mathfrak{c}_i}$. On notera $X_K = X \times K$, $\mathfrak{X}$ la complétion formelle de $X$ le long de sa fibre spéciale, et $X_{rig}$ la fibre générique rigide de $\mathfrak{X}$. 

\begin{rema}
Le schéma $X_\mathfrak{c}$ diffère de $Y_\mathfrak{c}$ par un éclatement. En effet, le morphisme $X_\mathfrak{c} \to Y_\mathfrak{c}$ est birationnel (c'est un isomorphisme au-dessus du lieu de Rapoport), et d'après \cite{Da}, un morphisme projectif birationnel entre schémas quasi-projectifs intègres noethériens est un éclatement. De plus, le sous-schéma fermé relatif à cet éclatement est contenu dans le complémentaire du lieu de Rapoport, donc en particulier dans la fibre spéciale de $Y_\mathfrak{c}$. Si on note $X_{\mathfrak{c},rig}$ et $Y_{\mathfrak{c},rig}$ les espaces rigides associés respectivement à $X_\mathfrak{c}$ et $Y_\mathfrak{c}$, on en déduit donc que $X_{\mathfrak{c},rig} \simeq Y_{\mathfrak{c},rig}$. 
\end{rema}

\begin{rema}
Nous aurions pu ajouter dans la définition de $X_\mathfrak{c}$ une condition de filtration pour le faisceau $\omega_H$ (ce qui est fait dans \cite{Sa2}). Nous aurions obtenu un espace plus régulier, mais notre définition est suffisante dans notre cadre.
\end{rema}

Dans \cite{Fa}, Fargues a défini une fonction degré pour les schémas en groupes finis et plats définis sur l'anneau des entiers d'une extension finie de $\mathbb{Q}_p$. Nous utilisons cette fonction pour décrire l'espace rigide $X_{rig}$.

\begin{defi}
On définit la fonction $Deg : X_{rig} \to \prod_{i=1}^g [0,f_i]$ par
$$Deg(A,i,\phi,H,\omega_{A,\sigma,j}) = (\text{deg } H_i)_{1 \leq i \leq g}$$
où deg est la fonction définie par Fargues dans \cite{Fa}.
\end{defi}

On notera également $Deg_i : X_{rig} \to [0,f_i]$ la $i$-ème composante de la fonction $Deg$. Si $v=(v_1, \dots, v_g)$, avec $v_i \in [0,f_i]$, on note $X_v = $deg$^{-1} (v)$ et $X_{\geq v} = $deg$^{-1} (\prod_i [v_i,f_i])$. De même, si $I=\prod_{j=1}^g I_j$, où $I_j$ est un intervalle de $[0,f_j]$, on note $X_I = $deg$^{-1} I$.

\begin{prop}
Si $I$ est un produit d'intervalles, $X_I$ est un ouvert de $X_{rig}$. Si $I$ est de plus compact à bornes rationnelles, $X_I$ est quasi-compact.
\end{prop}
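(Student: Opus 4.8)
The plan is to reduce the statement to a purely local computation on affinoid pieces of $X_{rig}$, where the degree function becomes a minimum of valuations of explicit analytic functions, and then to invoke the standard theory of Weierstrass and Laurent domains. First I would cover $X_{rig}$ by the rigid generic fibres $U = \mathcal{U}^{rig}$ of finitely many formal affine opens $\mathcal{U} = \mathrm{Spf}(A_0)$ of $\mathfrak{X}$; such a finite cover exists because $\mathfrak{X}$, being the $p$-adic completion of the finite type $O_K$-scheme $X$, is quasi-compact, and hence $X_{rig}$ is quasi-compact. On each $\mathcal{U}$ the group scheme $\mathcal{H}_i = H[\pi_i]$ is finite flat of order $p^{f_i}$, so its conormal sheaf $\omega_{\mathcal{H}_i}$ is a coherent $A_0$-module, and it is $p$-power torsion since $H_i$ is \'etale on the characteristic $0$ fibre. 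By Fargues' definition, the degree of $\mathcal{H}_i$ at a classical point equals the valuation of the zeroth Fitting ideal of $\omega_{\mathcal{H}_i}$: if $\omega_{\mathcal{H}_i,x} \simeq \bigoplus_j O_L/x_j$ then $\mathrm{Fitt}_0 = (\prod_j x_j)$ and $v(\mathrm{Fitt}_0) = \sum_j v(x_j) = \deg H_{i,x}$. Writing $\mathrm{Fitt}_0(\omega_{\mathcal{H}_i}) = (g_1, \ldots, g_r)$ with $g_k \in A_0 \subset A$ and using that Fitting ideals commute with base change, I get on $U$ the formula $Deg_i = \min_{1 \le k \le r} v(g_k)$, a minimum of valuations of finitely many analytic functions.

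The conclusion then follows from the behaviour of valuations on an affinoid $U = \mathrm{Sp}(A)$. Since the scalar $\varpi_a \in K^\times$ with $v(\varpi_a) = a$ is a unit in $A$, the loci $\{v(g) \ge a\} = \{|g/\varpi_a| \le 1\}$ and $\{v(g) \le a\} = \{|\varpi_a/g| \le 1\}$ are a Weierstrass and a Laurent domain respectively, hence quasi-compact admissible opens whenever $a$ is rational; for an arbitrary bound they remain admissible opens, since every analytic function takes only rational values at classical points, so these level sets are (finite or increasing) unions of rational domains. Consequently $\{Deg_i \ge a\} = \bigcap_k \{v(g_k) \ge a\}$ and $\{Deg_i \le a\} = \bigcup_k \{v(g_k) \le a\}$ are admissible opens, and are quasi-compact when $a$ is rational. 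Intersecting the two endpoint conditions shows that for any interval $I_i$ the locus $\{Deg_i \in I_i\} \cap U$ is an admissible open of $U$, quasi-compact as soon as $I_i$ is compact with rational bounds.

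Finally, for $I = \prod_{i=1}^g I_i$ one has $X_I \cap U = \bigcap_{i=1}^g \{Deg_i \in I_i\} \cap U$, a finite intersection, hence again an admissible open of $U$, and quasi-compact under the compact rational hypothesis. Gluing over the finite cover shows that $X_I$ is an admissible open of $X_{rig}$, and in the second case a finite union of quasi-compact opens, therefore quasi-compact. The main obstacle is the local structural input of the first paragraph: one must verify that Fargues' pointwise degree is genuinely computed by the Fitting ideal of the conormal sheaf of a finite flat integral model of $H_i$, and that this computation is compatible with base change to each classical point, so that $Deg_i$ really is the function $\min_k v(g_k)$ of global analytic functions on $U$. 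Once this identification is secured, all the topological assertions are formal consequences of the theory of rational domains.
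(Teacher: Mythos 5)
Your argument is correct, and you should first be aware that the paper itself contains no proof of this proposition: it simply refers to \cite{Bi}, proposition 1.13, so your proposal in effect supplies the content of that outsourced reference. What you do is exactly the standard argument underlying it (see also \cite{Pi1} for the Siegel case): cover $X_{rig}$ by the generic fibres of finitely many formal affine opens $\mathrm{Spf}(A_0)$ of $\mathfrak{X}$, observe that Fargues' degree is by definition the valuation of $\mathrm{Fitt}_0(\omega_{H_i})$, that the conormal module and its Fitting ideal commute with the base change $A_0 \to O_L$ attached to a classical point, and hence that $Deg_i = \min_k v(g_k)$ for finitely many $g_k \in A_0$; the topological assertions then follow from the theory of Weierstrass and Laurent domains. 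Two small repairs are needed. First, for a general rational $a$ there is no element $\varpi_a \in K^\times$ with $v(\varpi_a)=a$, since the value group of $K$ is discrete; write $a = m/q$ and use instead $\{v(g)\ge a\} = \{\,|g^q/p^m| \le 1\,\}$ (Weierstrass) and $\{v(g)\le a\} = \{\,|p^m/g^q| \le 1\,\}$ (Laurent), which changes nothing else in the argument. Second, for strict or irrational endpoints, the claim that an increasing union of rational domains such as $\bigcup_{a'>a,\,a'\in\mathbb{Q}}\{v(g)\ge a'\}$ is an admissible open deserves a word of justification: any affinoid mapping into the union has $v(g)$ attaining its infimum (maximum modulus principle), hence factors through one member of the union, which gives admissibility of the covering. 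With these cosmetic fixes, your proof is complete, and the final gluing step (locality of admissible opens over a finite admissible affinoid covering, and quasi-compactness of a finite union of quasi-compact opens) is correct as written.
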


\begin{proof}
Voir \cite{Bi} proposition $1.13$.
\end{proof}

Le lieu ordinaire-multiplicatif est $X_{(f_1, \dots, f_g)}$ ; c'est le lieu où $H$ est de type multiplicatif.

\subsection{Formes modulaires de Hilbert}

Soit $\Sigma = $Hom$(F,\overline{K})$, et $\kappa = (k_\sigma) \in \mathbb{Z}^\Sigma$. Rappelons que pour $1 \leq i \leq g$, on a noté \\
$\Sigma_i = Hom(F_{\pi_i},\overline{K}) =  \{ \sigma \in \Sigma, v(\sigma(\pi_i)) > 0 \}$, $S_i = Hom(F_{\pi_i}^{nr},\overline{K})$, et $\sigma_1, \dots, \sigma_{e_i}$ les éléments de $\Sigma_i$ dont la restriction à $F_{\pi_i}^{nr}$ est $\sigma$, pour tout $\sigma \in S_i$.

\begin{defi}
On définit le faisceau inversible $\omega^\kappa$ sur $X$ par
$$\omega^\kappa = \bigotimes_{i=1}^g \bigotimes_{\sigma \in S_i} \bigotimes_{j=1}^{e_i} (\omega_{A,\sigma,j} / \omega_{A,\sigma,j-1})^{k_{\sigma_j}}$$
\end{defi}

\begin{defi}
Une forme de Hilbert de poids $\kappa$ à coefficients dans une $O_K$-algèbre $C$ est un élément de $H^0(X \times $ Spec $ C,\omega^\kappa)$. 
\end{defi}

\begin{rema}
Au-dessus du lieu de Rapoport, le faisceau $\omega_A$ est un $O_F \otimes_{\mathbb{Z}} \mathcal{O}_{X}$-module libre de rang $1$. Si $U$ est un ouvert de ce lieu, on peut voir une forme modulaire $f$ à coefficients dans $O_K$ comme une loi fonctorielle, qui à un $R$-point $(A,i,\phi,H)$ de $U$ (on omet la filtration de $\omega_A$, qui est canonique) et une trivialisation $\omega : (R \otimes_{\mathbb{Z}}~O_F) \simeq~\omega_A$ associe un élément $f(A,i,\phi,H,\omega) \in R$ tel que pour tout $\lambda \in (R \otimes_{\mathbb{Z}} O_F)^\times$
$$f(A,i,\phi,H,\lambda \omega) = \lambda^{-\kappa} f(A,i,\phi,H,\omega)$$
où $\lambda^{-\kappa} = \prod_{\sigma \in \Sigma} \sigma(\lambda)^{-k_\sigma}$. \\
Si $U$ est un ouvert quelconque de $X$, on peut voir une forme modulaire $f$ à coefficients dans $O_K$ comme une loi fonctorielle, qui à un $R$-point $(A,i,\phi,H,\omega_{A,\sigma,j})$ et des isomorphismes de $R$-modules $\omega_\sigma : R^{e_i} \simeq \omega_{A,\sigma}$ respectant la filtration de $\omega_{A,\sigma}$ pour tout $\sigma \in S_i$, associe un élément $f(A,i,\phi,H,\omega_{A,\sigma,j},\omega_\sigma) \in R$, tel que pour tout couple $(l_\sigma)$, où $l_\sigma$ est une matrice triangulaire supérieure de $GL_{e_i}(R)$ pour $\sigma \in S_i$, on ait
$$f(A,i,\phi,H,\omega_{A,\sigma,j},l_\sigma \omega_\sigma) = (\prod_{i=1}^g \prod_{\sigma \in S_i} \prod_{j=1}^{e_i} \lambda_{\sigma_j}^{-k_{\sigma_j}}) f(A,i,\phi,H,\omega_{A,\sigma,j},\omega_\sigma)$$
où les $(\lambda_{\sigma_j})$ sont les coefficients diagonaux de $l_\sigma$.
\end{rema}

On note encore $\omega^\kappa$ l'analytifié de de ce faisceau, qui est un faisceau sur $X_{rig}$. Nous montrerons dans la partie $\ref{compact}$ que, par GAGA et le principe de Koecher, on a
$$H^0(X_K,\omega^{\kappa}) = H^0(X_{rig},\omega^{\kappa})$$

\begin{defi}
L'espace des formes modulaires surconvergentes de poids $\kappa$ est défini comme
$$H^0(X_K,\omega^\kappa)^\dagger := \text{colim}_\mathcal{V} H^0 (\mathcal{V},\omega^\kappa)$$
où la colimite est prise sur les voisinages stricts $\mathcal{V}$ du lieu ordinaire-multiplicatif $X_{(f_1, \dots, f_g)}$ dans $X_{rig}$.
\end{defi}

On dispose d'une application de restriction $H^0(X_K,\omega^{\kappa}) \to H^0(X_K,\omega^\kappa)^\dagger$. Cette application est injective, et l'image est l'ensemble des formes classiques.

\subsection{Normes}

Nous souhaitons définir une norme sur l'espace des formes modulaires. Soit $\mathcal{U}$ un ouvert de $X_{rig}$ et $f \in H^0(\mathcal{U},\omega^\kappa)$. Le faisceau $\omega^\kappa$ étant inversible sur $\mathcal{U}$, on peut définir comme dans \cite{Ka} une élément $|f(x)|$ pour tout $x \in \mathcal{U}$. Rappelons brièvement comment procéder. Soit $x \in \mathcal{U}$ un $L$-point, où $L$ est une extension finie de $K$. On a donc un morphisme $x : $ Spec $ L \to \mathcal{U}$, qui provient d'un unique morphisme $\tilde{x} : $ Spf $ O_L \to \mathfrak{X}$. Alors
$$H^0(\text{Spec } L , x^* \omega^\kappa) = H^0(\text{Spf } O_L,\tilde{x}^* \omega^\kappa) \otimes_{O_L} L$$
On définit alors une norme $| \cdot |_x$ sur $H^0($Spec $ L,x^* \omega^\kappa)$ en identifiant $H^0($Spf $ O_L, \tilde{x}^* \omega^\kappa)$ et les éléments de norme plus petite que $1$. Alors on définit $|f(x)| := |x^* f |_x$.

\begin{defi}
La norme de f sur $\mathcal{U}$ est définie comme
$$|f|_\mathcal{U} := \sup_{x \in \mathcal{U}} |f(x)|$$
\end{defi}

Cet élément est a priori infini, mais est fini si $\mathcal{U}$ est quasi-compact. On notera $\tilde{\omega}^\kappa$ le faisceau des fonctions de norme plus petite que $1$. Rappelons également le lemme suivant, dû à Kassaei (\cite{Ka}), qui prouve qu'une forme modulaire est définie par ses réductions modulo $p^n$ pour tout $n$.

\begin{lemm} \label{glue}
Soit $\mathcal{U}$ un ouvert quasi-compact de $X_{rig}$. On a :
$$H^0(\mathcal{U},\omega^\kappa) \simeq H^0(\mathcal{U},\tilde{\omega}^\kappa) \otimes_{O_K} K \simeq \left( \underset{\leftarrow}{\lim} \text{ } H^0(\mathcal{U},\tilde{\omega}^\kappa / p^n) \right) \otimes_{O_K} K$$
\end{lemm}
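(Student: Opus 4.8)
The plan is to establish the two isomorphisms separately, reducing everything to the integral structure. Write $\mathcal{B} = H^0(\mathcal{U},\omega^\kappa)$ equipped with the sup norm $|\cdot|_\mathcal{U}$; since $\mathcal{U}$ is quasi-compact this norm is finite, so $\mathcal{B}$ is a $K$-Banach space whose closed unit ball is exactly $M := H^0(\mathcal{U},\tilde{\omega}^\kappa)$. For the first isomorphism I would check that the natural map $M \otimes_{O_K} K \to \mathcal{B}$ is bijective. It is surjective: given $f \in \mathcal{B}$, finiteness of $|f|_\mathcal{U}$ furnishes an integer $n$ with $|p^n f|_\mathcal{U} \leq 1$, so $p^n f \in M$ and $f = p^{-n}(p^n f)$ lies in the image. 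It is injective because $M$ is $p$-torsion free (it sits inside the $K$-vector space $\mathcal{B}$) and $-\otimes_{O_K} K$ is flat.

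For the second isomorphism the engine is that $M$ is $p$-adically complete and separated: separatedness follows from $|f|_\mathcal{U} \leq |p|^n$ for all $n$ forcing $f = 0$, and completeness from completeness of the Banach space $\mathcal{B}$, so that $M \simeq \varprojlim_n M/p^n M$. It then remains to compare $M/p^n M$ with $H^0(\mathcal{U},\tilde{\omega}^\kappa/p^n)$. Since $\mathcal{U}$ is quasi-compact it admits, by Raynaud's theory, an admissible formal model $\mathfrak{U}$ which is a Noetherian formal scheme over $O_K$, and $\tilde{\omega}^\kappa$ extends to a coherent sheaf $\mathcal{F}$ on $\mathfrak{U}$ with $\mathcal{F}/p^n\mathcal{F}$ reducing to $\tilde{\omega}^\kappa/p^n$. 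The theorem on formal functions (EGA III, for coherent sheaves on Noetherian formal schemes with ideal of definition $(p)$) then gives $H^0(\mathfrak{U},\mathcal{F}) \simeq \varprojlim_n H^0(\mathfrak{U},\mathcal{F}/p^n\mathcal{F})$, that is $M \simeq \varprojlim_n H^0(\mathcal{U},\tilde{\omega}^\kappa/p^n)$; tensoring with $K$ and combining with the first isomorphism yields the statement. Concretely, on a finite affinoid cover $\{\mathcal{U}_\alpha\}$ with integral models $\mathrm{Spf}\,A_\alpha$ over Noetherian $A_\alpha$, the sheaf $\tilde{\omega}^\kappa$ corresponds to a finitely generated $A_\alpha$-module, so reduction modulo $p^n$ commutes with sections piece by piece, and the Čech complex for this finite cover propagates the comparison to $\mathcal{U}$.

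The main obstacle is precisely the commutation of global sections with reduction modulo $p^n$ and with the inverse limit: a priori $M/p^n M$ injects into $H^0(\mathcal{U},\tilde{\omega}^\kappa/p^n)$ with cokernel measured by $H^1(\mathcal{U},\tilde{\omega}^\kappa)[p^n]$, and one must ensure that these torsion discrepancies, together with the associated $\varprojlim^1$ term, vanish in the limit. This is exactly where both hypotheses are used: quasi-compactness provides a finite affinoid cover, forcing all Čech complexes to be finite and the relevant inverse systems to be Mittag-Leffler, while coherence over Noetherian affinoid algebras guarantees that reduction modulo $p^n$ commutes with sections locally. Without quasi-compactness the sup norm could be infinite and the first isomorphism would already fail, and the formal-functions comparison would acquire uncontrolled error terms; with it, the formal functions theorem disposes of the discrepancy cleanly.
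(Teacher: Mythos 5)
The paper does not actually prove this lemma: it is quoted from Kassaei's article [Ka], so your attempt has to be measured against the argument given there. Your treatment of the first isomorphism is correct and is the standard one: quasi-compactness makes the sup norm finite, scaling by a power of $p$ gives surjectivity of $M \otimes_{O_K} K \to H^0(\mathcal{U},\omega^\kappa)$ where $M = H^0(\mathcal{U},\tilde{\omega}^\kappa)$, and injectivity follows from torsion-freeness; the $p$-adic separatedness and completeness of $M$ are also fine. The problem is the second isomorphism, which is where essentially all the content of the lemma lies, and there your argument has a genuine gap.

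The gap is the identification $H^0(\mathcal{U},\tilde{\omega}^\kappa/p^n) \simeq H^0(\mathfrak{U},\mathcal{F}/p^n\mathcal{F})$, which you assert rather than prove (``$\mathcal{F}/p^n\mathcal{F}$ reducing to $\tilde{\omega}^\kappa/p^n$'', ``reduction modulo $p^n$ commutes with sections piece by piece''). The sheaf $\tilde{\omega}^\kappa/p^n$ is a quotient sheaf on the rigid site: its sections over $\mathcal{U}$, or over a single affinoid $\mathcal{U}_\alpha$ of your cover, are given by arbitrary finite admissible coverings, and by Raynaud's theory these correspond to Zariski coverings of arbitrary admissible blow-ups of the model, not of the fixed model $\mathfrak{U}$ (resp.\ $\mathrm{Spf}\,A_\alpha$). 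So Noetherian coherence of $A_\alpha$ does not give the local commutation you invoke: the statement for $\mathcal{U}_\alpha$ is the same unproved assertion one level down. To see exactly what is missing, use the exact sequence of sheaves $0 \to \tilde{\omega}^\kappa \overset{p^n}{\to} \tilde{\omega}^\kappa \to \tilde{\omega}^\kappa/p^n \to 0$: it yields $0 \to M/p^nM \to H^0(\mathcal{U},\tilde{\omega}^\kappa/p^n) \to H^1(\mathcal{U},\tilde{\omega}^\kappa)[p^n] \to 0$, where $[p^n]$ denotes $p^n$-torsion, and since the system $(M/p^nM)$ has surjective transition maps (this is the only thing Mittag--Leffler contributes), passing to the inverse limit gives a short exact sequence
$$0 \to M \to \underset{\leftarrow}{\lim} \text{ } H^0(\mathcal{U},\tilde{\omega}^\kappa/p^n) \to T_p\bigl(H^1(\mathcal{U},\tilde{\omega}^\kappa)\bigr) \to 0$$
in which the Tate module $T_p$ (inverse limit along multiplication by $p$) is always torsion-free. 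Hence the lemma is equivalent to the vanishing of this Tate module, that is, to a bound, uniform in $n$, on the exponent of the $p^n$-torsion of $H^1(\mathcal{U},\tilde{\omega}^\kappa)$; tensoring with $K$ cannot kill a nonzero torsion-free cokernel. Your proposal contains no such bound: the theorem on formal functions does not apply, since $\mathfrak{U}$ is not proper over $O_K$ (and the statement you extract from it, $H^0(\mathfrak{U},\mathcal{F}) = \underset{\leftarrow}{\lim} \text{ } H^0(\mathfrak{U},\mathcal{F}/p^n)$, is the trivial part, valid simply because $\mathcal{F} = \underset{\leftarrow}{\lim} \text{ } \mathcal{F}/p^n$ as sheaves on the formal scheme). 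Producing this uniform control --- comparing integral rigid sections with a formal model up to a power of $p$ bounded independently of $n$ and of the admissible covering --- is precisely the content of Kassaei's gluing lemma, and it is the step your proof leaves open.
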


\section{Opérateurs de Hecke} \label{hecke}

\subsection{Définition} \label{heckedef}

Soit $\mathfrak{c}$ un idéal fractionnaire, et $\mathfrak{m}$ un idéal premier de $O_F$. On considère la correspondance $C_{\mathfrak{c},\mathfrak{cm}}$ définie sur $K$ comme suit : c'est l'espace de modules dont les $R$-points sont les $(A,i,\phi,H,\omega_{A,\sigma,j},L)$, où $(A,i,\phi,H,\omega_{A,\sigma,j}) \in (X_\mathfrak{c} \times K) (R)$, et $L$ est un sous-groupe de rang $N(\mathfrak{m})$ de $A[\mathfrak{m}]$ stable par $O_F$ et disjoint de $H$. On dispose de deux projections $p_1$ et $p_2$, respectivement sur $X_\mathfrak{c} \times K$ et $X_\mathfrak{cm} \times K$. La première projection $p_1$ est l'oubli de $L$, et la projection $p_2$ est le quotient par $L$ : $p_2(A,i,\phi,H,\omega_{A,\sigma,j},L) = (A/L,i',\phi',H',\omega_{A,\sigma,j}')$, où $i'$, $\phi'$ sont les structures de niveau et polarisation induites par $i$ et $\phi$ sur $A/L$, et $H'$ est l'image de $H$ dans $A/L$. Rappelons que puisque nous travaillons sur $K$, la filtration $\omega_{A,\sigma,j}'$ est définie canoniquement.  \\
Notons pour tout $i$, $\sigma(i)$ l'unique élément tel que $\mathfrak{c}_i \mathfrak{m}$ soit égal à $\mathfrak{c}_{\sigma(i)}$ dans $Cl(F)^+$, c'est-à-dire qu'il existe un élément $x_i$ totalement positif avec $\mathfrak{c}_i \mathfrak{m} = x_i \mathfrak{c}_{\sigma(i)}$. L'élément $x_i$ est déterminé à un élément de $O_F^{\times,+}$ près, et nous fixerons le choix de ces éléments dans la suite. Le choix de $x_i$ donne un isomorphisme $X_{\mathfrak{c}_i \mathfrak{m}} \simeq X_{\mathfrak{c}_{\sigma(i)}}$. On peut donc voir la projection $p_2 : C_{\mathfrak{c}_i,\mathfrak{c}_i \mathfrak{m}} \to X_{\mathfrak{c}_i \mathfrak{m}} \times K$ comme une projection sur $X_{\mathfrak{c}_{\sigma(i)}} \times K$. On note $C_\mathfrak{m} = \coprod_i C_{\mathfrak{c}_i,\mathfrak{c}_i \mathfrak{m}}$ ; d'après ce qui précède, on a donc deux projections $p_1, p_2$ :~$C_\mathfrak{m} \to~X_K$. \\
On note encore $p_1$ et $p_2$ les morphismes $C_{\mathfrak{m},rig} \to X_{rig}$, où $C_{\mathfrak{m},rig}$ est l'espace rigide associé à $C_{\mathfrak{m}}$.

\begin{defi}
L'opérateur de Hecke géométrique $U_\mathfrak{m}$, défini sur les parties de $X_{rig}$, est défini par 
$$U_\mathfrak{m} (S) = p_2 p_1^{-1} (S)$$
Cette correspondance envoie les parties finies dans les parties finies, les ouverts zariskiens dans les ouverts zariskiens, et les ouverts admissibles quasi-compacts dans les ouverts admissibles quasi-compacts. 
\end{defi}

\begin{rema}
Pour définir l'opérateur $U_\mathfrak{m}$, nous avons eu besoin d'identifier $X_{\mathfrak{c}_i \mathfrak{m}}$ et $X_{\mathfrak{c}_{\sigma(i)}}$ à l'aide d'un élément totalement positif $x_i$ pour tout $i$, défini à une unité totalement positive près. La définition de cet opérateur dépend donc du choix de ces éléments.
\end{rema}

De même, on définit l'opérateur de Hecke agissant sur les formes modulaires. Rappelons que la projection $p_2 : C_{\mathfrak{m},rig} \to X_{rig}$ provient des morphismes $C_{\mathfrak{c}_i,\mathfrak{c}_i \mathfrak{m}} \to~X_{\mathfrak{c}_i \mathfrak{m}} \times~K$, composés avec les isomorphismes $X_{\mathfrak{c}_i \mathfrak{m}} \times K \simeq X_{\mathfrak{c}_{\sigma(i)}} \times K$, obtenus grâce aux éléments $x_i$. De même, l'élément $x_i$ induit un isomorphisme $H^0(X_{\mathfrak{c}_i \mathfrak{m}}, \omega^\kappa) \simeq H^0(X_{\mathfrak{c}_{\sigma(i)}}, \omega^\kappa)$. Cet isomorphisme envoie un élément $f \in H^0(X_{\mathfrak{c}_i \mathfrak{m}}, \omega^\kappa)$ vers la forme modulaire $g$ définie par $g(A,i,\phi,H,\omega_{A,\sigma,j},\omega_\sigma) = f(A,i,x_i \phi,H,\omega_{A,\sigma,j},\omega_\sigma)$. Les morphismes $H^0(X_{\mathfrak{c}_{\sigma(i)}} \times K, \omega^\kappa) \simeq H^0(X_{\mathfrak{c}_i \mathfrak{m}} \times K, \omega^\kappa) \to H^0(C_{\mathfrak{c}_i,\mathfrak{c}_i \mathfrak{m}},p_2^* \omega^\kappa)$ donnent donc un morphisme $H^0(X_K,\omega^\kappa) \to H^0(C_\mathfrak{m},p_2^* \omega^\kappa)$ (et un mophisme analogue pour les espaces rigides associés). \\
Soit $\pi_\mathfrak{m} : A \to A/L$ l'isogénie universelle sur $C_\mathfrak{m}$ ; elle induit un morphisme $\pi_\mathfrak{m}^* :~\omega_{A/L} \to~\omega_A$. Or au-dessus de $K$ on a la décomposition du faisceau $\omega_A$ en $\omega_A = \oplus_{\tau \in \Sigma} \omega_{A,\tau}$. Pour tout $\kappa \in \mathbb{Z}^\Sigma$, le morphisme $\pi_\mathfrak{m}^*$ induit donc un morphisme
$$\pi_\mathfrak{m}^\kappa : p_2^* \omega^\kappa \to p_1^* \omega^\kappa$$
Ce dernier induit donc un morphisme $H^0(\mathcal{V},p_2^* \omega^\kappa) \to H^0(\mathcal{V},p_1^* \omega^\kappa)$ pour tout ouvert $\mathcal{V}$ de $C_{\mathfrak{m},rig}$. 

\begin{defi}
Soit $\mathcal{U}$ un ouvert de $X_{rig}$. L'opérateur de Hecke $U_\mathfrak{m}$ agissant sur les formes modulaires de poids $\kappa$ est défini par le morphisme composé
\begin{displaymath}
U_\mathfrak{m} :   H^0(U_\mathfrak{m} (\mathcal{U}),\omega^\kappa) \to H^0 ( p_1^{-1} (\mathcal{U}), p_2^* \omega^\kappa) \overset{\pi_\mathfrak{m}^\kappa}{\to} H^0(p_1^{-1}(\mathcal{U}) , p_1^* \omega^\kappa) \overset{N(\mathfrak{m})^{-1} Tr_{p_1}}{\to}  H^0(\mathcal{U},\omega^\kappa)
\end{displaymath}
\end{defi}

\begin{rema}
Le terme $N(\mathfrak{m})^{-1}$ sert à normaliser l'opérateur de Hecke. Il maximise l'intégrabilité de cet opérateur, comme le montre un calcul sur les $q$-développements.
\end{rema}

\begin{rema}
Là encore, la définition de l'opérateur $U_\mathfrak{m}$ dépend des choix des éléments $x_i$. Néanmoins, si on se restreint aux formes $f$ invariantes sous l'action de $O_F^{\times,+}$ (c'est-à-dire telles que $f(A,i,\epsilon \phi,H,\omega_{A,\sigma,j},\omega_\sigma) = f(A,i,\phi,H,\omega_{A,\sigma,j},\omega_\sigma)$ pour tout $\epsilon \in O_F^{\times,+}$), alors cet opérateur est indépendant du choix des $x_i$ (voir \cite{P-S 3} paragraphe $6$ ou \cite{Pi2} paragraphe $1$). \\
De même, si on ne restreint pas aux formes invariantes par cette action, l'algèbre engendrée par les $U_\mathfrak{m}$, $\mathfrak{m}$ idéal premier divisant $\pi$, n'est plus nécessairement commutative.
\end{rema}

\begin{rema}
Si l'idéal $\mathfrak{m}$ est premier à $pN$, on note en général cet opérateur $T_\mathfrak{m}$. Néanmoins, nous utiliserons ces opérateurs uniquement pour $\mathfrak{m}$ divisant $p$, ce qui justifie notre notation.
\end{rema}

Dans la suite, nous nous restreindrons aux formes modulaires invariantes par l'action de $O_F^{\times,+}$. Par un abus de notation, on notera encore $H^0(\mathcal{U},\omega^\kappa)$ l'algèbre des formes modulaires $O_F^{\times,+}$-invariantes de poids $\kappa$ définies sur $\mathcal{U}$.  On dispose donc en particulier de $g$ opérateurs de Hecke $U_{\pi_1}, \dots, U_{\pi_g}$ agissant sur ces formes modulaires, et l'algèbre engendrée par ces opérateurs est commutative. 

\subsection{Propriétés}

Ces opérateurs se comportent bien relativement à la fonction Degré.

\begin{prop}
Soit $1 \leq i \leq g$, $x \in X_{rig}$ et $y \in U_{\pi_i} (x)$. Soit $(x_1, \dots, x_g)=~Deg(x)$, et $(y_1, \dots, y_g)=Deg(y)$. Alors
\begin{itemize}
\item $y_j=x_j$ pour $j \neq i$.
\item $y_i \geq x_i$
\end{itemize}
De plus, s'il existe $y \in U_{\pi_i}^{e_i} (x)$ avec $Deg(y)=Deg(x)$, alors $x_i \in \frac{1}{e_i} \mathbb{Z}$.
\end{prop}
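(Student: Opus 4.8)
The plan is to treat the three assertions separately, the first two being direct consequences of how the Fargues degree behaves under the isogeny defining $U_{\pi_i}$, and the third one requiring a finer analysis of an iterated isogeny. Recall first that a point $y \in U_{\pi_i}(x)$ is of the form $(A/L,\dots,H')$, where $L \subset A[\pi_i]$ is finite flat of order $p^{f_i}$, stable under $O_F$ and disjoint from $H$, and where $H'_j$ is the image of $H_j$ under the isogeny $q \colon A \to A/L$. Since $L$ is killed by $\pi_i$, the isogeny $q$ restricts to an isomorphism on the $\pi_j$-primary part of $A$ for every $j \neq i$; in particular $H_j \xrightarrow{\sim} H'_j$, whence $\deg H'_j = \deg H_j$ and $y_j = x_j$, proving the first point.

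For the second point I would concentrate on the $\pi_i$-component. As $L$ and $H_i$ are disjoint, $q$ induces an isogeny $H_i \to H'_i$ which is an isomorphism on the generic fibre (both groups have order $p^{f_i}$). The key input is Fargues' monotonicity of the degree (\cite{Fa}): for an isogeny of finite flat group schemes over the ring of integers of a finite extension of $\mathbb{Q}_p$ which is a generic isomorphism, the degree can only increase, with equality exactly when the map is an isomorphism. Applied to $H_i \to H'_i$ this gives $y_i = \deg H'_i \geq \deg H_i = x_i$.

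For the last assertion, consider a chain $x = z_0,\dots,z_{e_i}=y$ with $z_k \in U_{\pi_i}(z_{k-1})$, realised by a tower of isogenies $A = A_0 \xrightarrow{q_1} A_1 \to \cdots \xrightarrow{q_{e_i}} A_{e_i} = B$, with $L_k = \ker q_k \subset A_{k-1}[\pi_i]$ of order $p^{f_i}$ and $H^{(k)}$ the image of $H^{(k-1)}$. By the second point the quantities $\deg H^{(k)}$ are non-decreasing, so the hypothesis $Deg(y)=Deg(x)$ forces $\deg H^{(k)} = x_i$ for all $k$. A comparison of orders shows that $H^{(k)} = q_k(A_{k-1}[\pi_i])$ (both are subgroups of order $p^{f_i}$, the former contained in the latter), and the exact sequence $0 \to L_k \to A_{k-1}[\pi_i] \to H^{(k)} \to 0$, together with $\deg A_{k-1}[\pi_i] = f_i$ (the prime-to-$p$ polarisation identifies $A[\pi_i]$ with its Cartier dual), yields $\deg L_k = f_i - x_i$ for every $k$. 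The composite kernel $\widetilde{L} = \ker(A \to B)$ is an $O_F$-stable subgroup of $A[\pi_i^{e_i}]$ of order $p^{e_i f_i}$; being disjoint from $H_i$ it is forced to be cyclic, isomorphic to $O_{F_{\pi_i}}/\pi_i^{e_i}$, and the tower $\ker(A \to A_k)$ is precisely its $\pi_i$-adic filtration. Hence $\deg \widetilde{L} = \sum_k \deg L_k = e_i(f_i - x_i)$, with all graded pieces of equal degree $f_i - x_i$.

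It remains to prove that $\deg \widetilde{L} \in \mathbb{Z}$, which is equivalent to the desired conclusion $x_i \in \tfrac{1}{e_i}\mathbb{Z}$, and this is where I expect the main obstacle to lie. The cleanest route I see is to show that a degree-preserving cycle assembles into (or forces the existence of) a $\pi_i$-divisible subgroup $\mathcal{G} \subset A[\pi_i^\infty]$ whose $\pi_i^n$-torsion has degree $n(f_i - x_i)$: since the degree of the $p^n$-torsion of a $\pi_i$-divisible group equals $n$ times its dimension, an integer $\delta$, the relation $\pi_i^{n e_i}\sim p^n$ would give $e_i(f_i - x_i) = \delta$, hence $x_i = f_i - \delta/e_i \in \tfrac{1}{e_i}\mathbb{Z}$. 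The delicate point is to justify this isoclinic/$\pi_i$-divisible structure from a single cycle of length $e_i$; the unramified case $e_i = 1$ is already instructive, for there the equality $\deg H'_i = \deg H_i$ forces $A[\pi_i] = H_i \oplus L$ as a scheme-theoretic direct sum, and one must then argue that a Raynaud factor of the $\pi_i$-torsion of a $p$-divisible group has integral degree, equal to the number of embeddings on which it is of multiplicative type. Alternatively one may attack $\widetilde{L}$ directly through Raynaud's classification, or its Dieudonné module on the special fibre, where the uniformity of the graded degrees is precisely the isoclinicity that pins the total degree to an integer.
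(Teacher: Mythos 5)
Your treatment of the first two assertions is correct and coincides with the paper's: the kernel $L$ meets $A[\pi_j]$ trivially for $j\neq i$, so the degrees in the other components are unchanged, and Fargues' monotonicity applied to the generic isomorphism $H_i \to H_i'$ gives $y_i \geq x_i$. For the third assertion your reduction is also, in substance, the paper's: show that the graded pieces $L_k$ of the total kernel $\widetilde{L} \subset A[\pi_i^{e_i}]$ all have degree $f_i - x_i$, hence $\deg \widetilde{L} = e_i(f_i - x_i)$, and then prove that $\deg \widetilde{L}$ is an integer. One caveat on the first half: the exact sequence $0 \to L_k \to A_{k-1}[\pi_i] \to H^{(k)} \to 0$ does not follow from order-counting alone. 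Orders only give exactness on generic fibres; the map from the fppf quotient $A_{k-1}[\pi_i]/L_k$ to the schematic image $H^{(k)}$ is a priori only a generic isomorphism, and such a map can increase the degree. This is fixable: combine the two inequalities $\deg H^{(k-1)} + \deg L_k \leq \deg A_{k-1}[\pi_i] = f_i$ (from the generic embedding of $H^{(k-1)} \times L_k$) and $f_i - \deg L_k = \deg\bigl(A_{k-1}[\pi_i]/L_k\bigr) \leq \deg H^{(k)}$ (from the quotient side, by \cite{Fa}); since $\deg H^{(k)} = x_i$ for all $k$, these pin down $\deg L_k = f_i - x_i$.

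The genuine gap is the integrality of $\deg \widetilde{L}$, which you explicitly leave open; this is not a loose end but the technical heart of the statement, and none of the routes you sketch is carried out. The paper settles it in two steps. First it upgrades generic disjointness to a schematic direct sum: since $\deg H_i' = \deg H_i$, additivity gives $\deg(H_i + \widetilde{L}) = \deg H_i + \deg \widetilde{L}$, so the generic isomorphism $H_i \times \widetilde{L} \to H_i + \widetilde{L}$ preserves the degree and is therefore an isomorphism by Fargues. Second --- this is the step you are missing --- it proves by an explicit Dieudonn\'e-module computation over the special fibre (choosing a basis adapted to the direct sum of $\widetilde{L}$ and $H_i$, writing out the matrix of Frobenius, and comparing kernels and images) that $\mathrm{Im}(F_{|\widetilde{L}}) = \mathrm{Im}(F) \cap \widetilde{L}$, i.e. that $\widetilde{L}$ is a $BT_1$; a $BT_1$ is the $p$-torsion of a $p$-divisible group, so its degree is the dimension of that group, an integer. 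Your alternative route via a $\pi_i$-divisible subgroup of $A[\pi_i^\infty]$ fails exactly where you flag it: the hypothesis provides a single chain of length $e_i$ whose two endpoints have equal degree --- $y$ is a different abelian variety from $x$, there is no cycle to iterate --- so no $\pi_i$-divisible structure is available. Moreover, equality of the graded degrees (``isoclinicity'') of a finite flat group scheme does not by itself force integrality of the total degree; what does is the $BT_1$ property, whose proof in the paper requires precisely the schematic direct sum with $H_i$ that your argument never establishes.
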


\begin{proof}
Soit $x=(A,i,\phi,H,\omega_{A,\sigma,j})$ et $L$ le sous-groupe de $A[\pi_i]$ correspondant à $y$. Comme $L$ est disjoint de $A[\pi_j]$ pour $j \neq i$, on a un isomorphisme 
$A[\pi_j] \simeq (A/L)[\pi_j]$. Si on décompose $H$ en $\oplus_k H_k$ avec $H_k \subset A[\pi_k]$ pour $1 \leq k \leq g$, l'image de $H_j$ dans $(A/L)[\pi_j]$ est isomorphe à $H_j$, donc ont le même degré. Le premier point est donc vérifié. \\
De plus, $L$ est un supplémentaire générique de $H_i$ dans $A[\pi_i]$. Si on note $H_i'$ l'image de $H_i$ dans $(A/L)[\pi_i]$, on a alors un morphisme $H_i \to H_i'$, qui est un isomorphisme en fibre générique. D'après le corollaire $3$ de \cite{Fa}, on a deg $H_i \leq $deg $H_i'$, ce qui prouve le second point. \\
Supposons maintenant qu'il existe $y \in U_{\pi_i}^{e_i} (x)$ avec $Deg(y)=Deg(x)$. Soit $L$ le sous-groupe de $A[\pi_i^{e_i}]$ correspondant à $y$. On remarque alors que $H_i$ et $L$ sont en somme directe dans $A[\pi_i^{e_i}]$. En effet, l'image $H_i'$ de $H_i$ dans $A/L$ a le même degré que $H_i$ par hypothèse. Comme $H_i'=(H_i+L)/L$, on en déduit par additivité de la fonction degré que
$$\text{deg } (H_i + L) = \text{deg } H_{i}' + \text{deg } L = \text{deg } H_i + \text{deg } L$$
Le morphisme $H_i \times L \to H_i + L$ conserve donc le degré, et est un isomorphisme en fibre générique. D'après \cite{Fa}, c'est un isomorphisme. Les groupes $H_i$ et $L$ sont donc en somme directe dans $A[\pi_i^{e_i}]$. En particulier, on a $A[\pi_i]= H_i \oplus L[\pi_i]$. \\
Si $e_i$ était égal à $1$, $H_i$ serait un facteur direct de $A[\pi_i]$, donc de $A[p]$ qui est un $BT_1$, un Barsotti-Tate tronqué d'échelon $1$ (c'est-à-dire la $p$-torsion d'un groupe $p$-divisible). Le groupe $H_i$ serait donc également un $BT_1$, et son degré serait entier. \\
Malheureusement, en général $A[\pi_i]$ n'est pas un $BT_1$, on sait seulement que $A[\pi_i^{e_i}]$ en est un. Nous allons prouver que le sous-groupe $L$ en est un également. Soit $D$ le module de Dieudonné correspondant à $A[\pi_i^{e_i}] \times \mathbb{F}_q$ (avec $q=p^{f_i}$), que l'on décompose suivant l'action de $\mathbb{F}_q$ en $D = \oplus_{j=1}^{f_i} D_j$. On dispose du Frobenius $F_j : D_j \to D_{j+1}$ et du Verschriebung $V_j : D_{j+1} \to D_j$. Ces applications vérifient $F_j V_j=0$ et $V_j F_j = 0$. De plus, $A[\pi_i^{e_i}]$ étant un $BT_1$, on a Im $F_j = $ Ker $V_j$ et Ker $F_j =$ Im $V_j$. Nous allons montrer que $L$ est un $BT_1$. Cette propriété est équivalente à ce que $L \times \mathbb{F}_q$ soit un $BT_1$, soit que Im ${F_j}_{|L} = $ Ker ${V_j}_{|L}$ pour tout $j$. Or Ker ${V_j}_{|L} =$ Ker $V_j \cap L = $ Im $V_j \cap L$. Nous allons donc montrer que Im ${F_j}_{|L} = $ Im $F_j \cap L$ pour tout $j$. \\
Par souci de simplification des écritures, nous supprimons l'indice $j$ ; on a donc des applications $F,V : D \to D$, et $D$ est un $\mathbb{F}_q$-espace vectoriel de dimension $2e_i$. On choisit une base $(u_1, \dots, u_{e_i}, v_1, \dots, v_{e_i})$ de $D$ de telle sorte que $L$ corresponde à $(u_1, \dots, u_{e_i})$ et $H_i$ à $(v_1)$. Cela est bien possible, puisque $L$ et $H_i$ sont en somme directe dans $A[\pi_i^{e_i}]$. De plus, on suppose la base de $D$ choisie de telle sorte que la multiplication par $\pi_i$ envoie $u_k$ sur $u_{k-1}$ et $v_k$ sur $v_{k-1}$. Le fait que les espaces $(u_1, \dots, u_k)$ et $(u_1, \dots, u_k, v_1, \dots, v_k)$ soient stables par $F$, et les isomorphismes $A[\pi_i^{j+k}]/[\pi_i^j] \simeq A[\pi_i^k]$ montrent que la matrice de $F$ dans cette base est de la forme
\begin{displaymath}
\left(
\begin{array}{cccccccc}
x_1 & x_2 & \cdots & x_{e_i} & 0   & \epsilon_2 & \cdots & \epsilon_{e_i}  \\
    & x_1 & \ddots & \vdots  &     & 0          & \ddots & \vdots          \\
		&     & \ddots & x_2     &     &            & \ddots & \epsilon_2      \\
		&     &        & x_1	   &     &            &        & 0               \\
		&     &        &         & y_1 & y_2        & \cdots & y_{e_i}         \\ 
		&     &        &         &     & y_1        & \ddots & \vdots          \\ 
		&     &        &         &     &            & \ddots & y_2             \\ 
		&     &        &         &     &            &        & y_1                
\end{array}
\right)
\end{displaymath}
Soit $r$ l'entier positif tel que $x_1 = \dots= x_r=0$ et $x_{r+1} \neq 0$, et $s$ celui vérifiant $y_1 = \dots = y_s=0$ et $y_{s+1} \neq 0$. On voit alors que le noyau de $F$ est inclus dans $(u_1, \dots, u_{e_i},v_1, \dots, v_s)$, et est de dimension au plus $r+s$. Or d'après le théorème du rang, et par auto-dualité de $A[\pi_i^{e_i}]$, on a $2e_i = $ dim Ker $F + $ dim Im $F = $ dim~Ker~$F+$~dim Ker $V = 2 $ dim Ker $F$, soit dim Ker $F = e_i$. D'où $r+s \geq e_i$. \\
On voit de plus qu'il existe $w_1, \dots, w_{e_i-r}$ dans $(u_1, \dots, u_{e_i})$ tel que $(u_1, \dots, u_r, v_1+~w_1 , \dots, v_{e_i-r}+~w_{e_i-r})$ soit inclus dans le noyau de $F$. Par égalité des dimensions, c'est une égalité. Supposons maintenant que $s > e_i - r$. Alors $F v_{e_i-r+1} \in (u_1, \dots, u_{e_i-r})$ donc il existe $w_{e_i-r+1} \in (u_1, \dots, u_{e_i})$ tel que $v_{e_i-r+1} + w_{e_i-r+1}$ soit dans le noyau de $F$, ce qui est impossible par l'égalité précédente. D'où $s=e_i-r$. \\
Montrons maintenant que Im $F_{|L} = $ Im $F \cap L$. L'inclusion Im $F_{|L} \subset $ Im $F \cap L$ est évidente. Soit $x \in D$ tel que $Fx \in L$. Rappelons que $L=(u_1, \dots, u_{e_i})$. D'après ce qui précède, comme $y_{e_i-r+1} \neq 0$, $x \in (u_1, \dots, u_{e_i}, v_1, \dots, v_{e_i-r})$ donc $Fx \in~(u_1, \dots, u_{e_i-r}) =$~Im~$F_{|L}$. Nous avons donc montré que $L$ est un $BT_1$, donc en particulier son degré est entier. \\
Soit $L_k = L[\pi^k]$. On dispose d'isogénies $A \to A/L_1 \to \dots \to A/L_{e_i} = A/L$ ; si on note $H_{i,k}$ l'image de $H_i$ dans $A/L_k$, on voit que la suite $($ deg $H_{i,k})_k$ est croissante. Or par hypothèse $H_i$ et $H_{i,e_i}$ ont même degré. Les $H_{i,k}$ ont donc le même degré que $H_i$, et on a pour $0 \leq k < e_i$
$$(A/L_k) [\pi_i] = H_{i,k} \oplus L_{k+1}/L_k$$
donc deg $L_{k+1}/L_k = f_i - $ deg $H_i$. Cette quantité étant constante, on a donc deg $L_{k} = \frac{k}{e_i}$ deg $L$, et deg $H_i = f_i- $deg $L_1 = 1 - \frac{1}{e_i}$ deg $L \in \frac{1}{e_i} \mathbb{Z}$.
\end{proof}

\begin{rema}
La démonstration de la deuxième partie de la proposition est différente de celle dans \cite{Pi2}. Elle donne un résultat plus fort, mais se généralise moins facilement à des variétés plus générales que les variétés de Hilbert.
\end{rema}

On voit en particulier que les opérateurs $U_{\pi_i}$ stabilisent les espaces $X_{\geq v}$, pour $v=(v_j)$ et $v_j \in [0,f_j]$. Ils agissent donc sur les formes modulaires surconvergentes. \\
De plus, l'opérateur $U_{\pi_i}^{e_i}$ augmente strictement le degré de $x$ si $Deg_i(x) \notin \frac{1}{e_i} \mathbb{Z}$. En réalité, nous avons une proposition plus forte.

\begin{prop} \label{dyna}
Soit $1 \leq i \leq g$, $k$ un entier compris entre $0$ et $f_ie_i-1$, et $\alpha, \beta$ deux rationnels tels que $\frac{k}{e_i} < \alpha < \beta < \frac{k+1}{e_i}$. Posons $X_{i,\geq u}=Deg_i^{-1} ([u,f_i])$, pour tout réel $u$. Alors il existe un entier $N$ tel que 
$$U_{\pi_i}^N(X_{i,\geq \alpha}) \subset X_{i,\geq \beta}$$
\end{prop}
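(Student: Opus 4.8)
The plan is to argue by combining the monotonicity of the preceding proposition with a quasi-compactness argument on a single degree stratum. First I would reduce to the points that can fail: since $Deg_i$ is non-decreasing along $U_{\pi_i}$, any $x$ with $Deg_i(x) \ge \tfrac{k+1}{e_i} > \beta$ satisfies $U_{\pi_i}^n(x) \subset X_{i,\ge\beta}$ for all $n$, and any trajectory that once attains degree $\ge\beta$ remains in $X_{i,\ge\beta}$ thereafter. Hence the only trajectories to control are those that stay inside the stratum $Z := Deg_i^{-1}([\alpha,\beta])$, which is quasi-compact because $[\alpha,\beta]$ is compact with rational endpoints (by the quasi-compactness criterion recalled above).

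The key dynamical input is a strict gain over blocks of length $e_i$. Since $[\alpha,\beta] \subset (\tfrac{k}{e_i},\tfrac{k+1}{e_i})$ contains no element of $\tfrac1{e_i}\mathbb{Z}$, the contrapositive of the second assertion of the preceding proposition shows that for every $x \in Z$ and every $y \in U_{\pi_i}^{e_i}(x)$ one has $Deg_i(y) \neq Deg_i(x)$; combined with the single-step monotonicity this forces the \emph{strict} inequality $Deg_i(y) > Deg_i(x)$. The heart of the argument is to upgrade this pointwise gain to a uniform one. To this end I would pass to the associated adic space and work on the $e_i$-fold correspondence, with its finite projections $q_1, q_2$ so that $U_{\pi_i}^{e_i} = q_2 q_1^{-1}$, and introduce $\varphi(x) := \big(\min_{z \in q_1^{-1}(x)} Deg_i(q_2(z))\big) - Deg_i(x)$.

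Because $q_1$ is finite, hence proper, and $Deg_i$ is continuous (Fargues), the function $x \mapsto \min_{z \in q_1^{-1}(x)} Deg_i(q_2(z))$ is lower semicontinuous: its sublevel set $\{x : \min \le c\}$ equals $q_1$ of the closed set $(Deg_i \circ q_2)^{-1}((-\infty,c])$, which is closed since $q_1$ is proper. As $Deg_i$ is continuous, $\varphi$ is the difference of a lower semicontinuous function and a continuous one, hence lower semicontinuous, and by the previous step it is strictly positive on $Z$. Since $Z$ is quasi-compact (genuinely compact on the adic space), $\varphi$ attains a positive minimum, so there is $\varepsilon > 0$ with $\varphi \ge \varepsilon$ on $Z$; equivalently, any passage through $e_i$ iterations whose starting point lies in $Z$ raises $Deg_i$ by at least $\varepsilon$.

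It then remains to count. Set $N := e_i\big(\lfloor (\beta-\alpha)/\varepsilon\rfloor + 1\big)$, take $x \in X_{i,\ge\alpha}$ and $y \in U_{\pi_i}^N(x)$, and write a chain $x = z_0, \dots, z_N = y$ with $z_{m+1} \in U_{\pi_i}(z_m)$. If the whole chain stayed in $Z$, then applying the bound $\varphi \ge \varepsilon$ at each of the $N/e_i$ block-starts gives $Deg_i(y) \ge \alpha + (N/e_i)\varepsilon > \beta$, contradicting $Deg_i(y) \le \beta$; hence the chain must leave $Z$ upward at some stage, after which monotonicity keeps it in $X_{i,\ge\beta}$. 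In every case $Deg_i(y) \ge \beta$, which is the asserted inclusion. I expect the main obstacle to be precisely this uniformity step: converting the qualitative strict increase supplied by the preceding proposition into a single $\varepsilon > 0$ valid on the whole stratum, which is why the argument leans on the continuity of the Fargues degree, the properness of $q_1$ (to obtain lower semicontinuity of the minimal gain), and the quasi-compactness of $Z$ (to force the infimum to be positive).
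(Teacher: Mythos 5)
Your proof is correct and follows essentially the same route as the paper: reduce to the quasi-compact stratum $Deg_i^{-1}([\alpha,\beta])$, use the second assertion of the preceding proposition together with the fact that $[\alpha,\beta]$ avoids $\frac{1}{e_i}\mathbb{Z}$ to get a strict gain under $U_{\pi_i}^{e_i}$, upgrade it to a uniform $\epsilon>0$ by quasi-compactness, and iterate. The only differences are presentational: the paper argues by contradiction with a sequence and simply asserts that the gain function $x \mapsto \inf_{y \in U_{\pi_i}^{e_i}(x)} Deg_i(y) - Deg_i(x)$ is continuous, whereas you produce an explicit $N$ and justify lower semicontinuity via properness of the projection — a slightly more careful write-up of the same argument.
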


\begin{proof}
Supposons par l'absurde qu'il existe $x_n \in X_{i,\geq \alpha}$ et $y_n \in U_{\pi_i}^n(x_n)$ avec $Deg_i (y_n)<\beta$. D'après la proposition précédente, cela entraîne $x_n \in X_{i,[\alpha,\beta]}:=~Deg_i^{-1}([\alpha,\beta])$. Or cet espace est quasi-compact ; de plus la fonction $X_{rig} \to \mathbb{R}$ définie par $x \to~\inf_{y \in U_{\pi_i}^{e_i}(x)} Deg_i(y) -~Deg_i(x)$ est continue, et à valeurs strictement positives sur $X_{i,[\alpha,\beta]}$. On en déduit qu'elle y atteint son minimum, soit qu'il existe $\epsilon > 0$, avec
$$ Deg_i(y) \geq Deg_i(x) + \epsilon$$
pour tout $x \in X_{i,[\alpha,\beta]}$ et $y \in U_{\pi_i}^{e_i}(x)$. \\
Cela implique donc que $Deg_i(y_{ne_i}) \geq n\epsilon + Deg_i(x_{ne_i}) \geq n \epsilon + \alpha$ pour tout $n$, ce qui est impossible.
\end{proof}

\subsection{Décomposition des opérateurs de Hecke} \label{decompo}

Soit $\mathcal{U}$ un ouvert quasi-compact de $X_{rig}$. Fixons un élément $i$ compris entre $1$ et $g$, et un élément rationnel $r \in [0,f_i]$. On note $X_{i,\leq r} := \{ x \in X_{rig}, Deg_i (x) \leq r \}$. Nous voulons stratifier notre ouvert $\mathcal{U}$ suivant le nombre de points de $U_{\pi_i}(x) \cap X_{i,\leq r}$. Pour tout $x=(A,i,\phi,H,\omega_{A,\sigma,j}) \in X_{rig}$, soit $N(x,r)$ le nombre de points de $U_{\pi_i}(x) \cap X_{i,\leq r}$. Définissons
$$\mathcal{U}_j := \{ x \in \mathcal{U}, N(x,r) \geq j \}$$

\begin{prop}
Les $(\mathcal{U}_j)$ forment une suite décroissante d'ouverts quasi-compacts, vide à partir d'un certain rang.
\end{prop}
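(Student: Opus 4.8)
The plan is to realise the counting function $x \mapsto N(x,r)$ through the Hecke correspondence $C_{\pi_i,rig}$, exploiting that over $K$ the projection $p_1$ is finite \'etale (in characteristic zero $A[\pi_i]$ is \'etale, so the subgroups $L$ are locally constant and $p_1$, which forgets $L$, is finite \'etale of some degree $d_i$ equal to the number of supplements of $H_i$ in $A[\pi_i]$). Set $\mathcal{C} = p_1^{-1}(\mathcal{U})$; as $p_1$ is finite and $\mathcal{U}$ is quasi-compact, $\mathcal{C}$ is a quasi-compact admissible open. By the earlier proposition on the sets $X_I$, the locus $X_{i,\leq r} = Deg_i^{-1}([0,r])$ is an admissible open, and it is quasi-compact because $[0,r]$ is compact with rational endpoints; hence $W := \mathcal{C} \cap p_2^{-1}(X_{i,\leq r})$ is again a quasi-compact admissible open, and $N(x,r)$ is the number of points of $p_1^{-1}(x)$ lying in $W$. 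The two easy assertions are then immediate: the sequence is decreasing since $N(x,r) \geq j+1$ forces $N(x,r) \geq j$, so $\mathcal{U}_{j+1} \subset \mathcal{U}_j$; and it is eventually empty because $N(x,r) \leq \# p_1^{-1}(x) \leq d_i$, a bound independent of $x$, whence $\mathcal{U}_j = \emptyset$ as soon as $j > d_i$.

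For openness and quasi-compactness I would treat both at once by a fibre-product construction. Let $\mathcal{C}^{(j)} = \mathcal{C} \times_{\mathcal{U}} \cdots \times_{\mathcal{U}} \mathcal{C}$ ($j$ factors), which is finite \'etale over $\mathcal{U}$ with structural map $q$, hence quasi-compact and open. Pulling $W$ back along the $j$ projections and intersecting yields a quasi-compact admissible open $W^{(j)} \subset \mathcal{C}^{(j)}$, the locus of $j$-tuples all lying in $W$. The decisive point is the locus $D^{(j)}$ of tuples with pairwise distinct entries: since $p_1$ is \'etale it is unramified, so its diagonal $\Delta \subset \mathcal{C} \times_{\mathcal{U}} \mathcal{C}$ is open, while finiteness makes $p_1$ separated, so $\Delta$ is also closed; thus $\Delta$ is clopen. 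Pulling back along each pair of projections, every $\Delta_{kl} \subset \mathcal{C}^{(j)}$ is clopen, so $D^{(j)} = \bigcap_{k<l}\bigl(\mathcal{C}^{(j)} \setminus \Delta_{kl}\bigr)$ is a finite intersection of clopens, hence itself a clopen quasi-compact admissible open. Consequently $W^{(j)} \cap D^{(j)}$ is a quasi-compact admissible open, and its image under the finite \'etale (so open and quasi-compact) map $q$ is a quasi-compact admissible open of $\mathcal{U}$. As $q\bigl(W^{(j)} \cap D^{(j)}\bigr)$ is exactly the set of $x$ for which $p_1^{-1}(x)$ contains at least $j$ distinct points of $W$, it equals $\mathcal{U}_j$, which is therefore quasi-compact and open.

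The hard part is keeping every topological statement inside the rigid G-topology rather than the naive one. One must remember that, although $X_{i,\leq r}$ and $X_{i,>r}$ are disjoint admissible opens covering $X_{rig}$, the pair is \emph{not} an admissible covering; this is precisely why $N(\cdot,r)$ fails to be locally constant and why the stratification by the $\mathcal{U}_j$ is genuinely needed. The care therefore concentrates on three verifications: that $p_1$ is finite \'etale over $K$, so that the clopen-diagonal argument applies and $\mathcal{C}^{(j)}$ is finite \'etale over $\mathcal{U}$; that $D^{(j)}$ is clopen and quasi-compact in the G-topological sense and that finite \'etale morphisms carry quasi-compact admissible opens to quasi-compact admissible opens; and finally that the number of points of $p_1^{-1}(x)$ landing in $W$ computes $N(x,r)$ as defined in terms of the image $U_{\pi_i}(x) = p_2\bigl(p_1^{-1}(x)\bigr)$, which holds because distinct $L$ give distinct points $A/L$ of $X_{rig}$, the level structure $\mu_N$ with $N \geq 3$ rigidifying the moduli problem.
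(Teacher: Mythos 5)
Your core construction is correct, and it is in substance the proof that the paper outsources: the paper's ``proof'' of this proposition is only the citation \emph{Voir \cite{Bi} lemme 2.7}, and the argument given there (and in the analogous statements of \cite{Pi1} and \cite{P-S 1}) is exactly the one you give --- pull everything back to the correspondence, where $p_1$ is finite �tale in characteristic zero, form the $j$-fold fibre product over $\mathcal{U}$, remove the pairwise diagonals (clopen, since �tale gives an open diagonal and finiteness gives a closed one), and push forward by the finite �tale, hence open and quasi-compact, structure map. The rigid-geometric facts you rely on (flat quasi-compact morphisms are open, clopen subspaces of quasi-compact spaces are quasi-compact, preimages of quasi-compact opens under finite maps are quasi-compact) are all valid, and the identification of $q\bigl(W^{(j)}\cap D^{(j)}\bigr)$ with the locus where the fibre $p_1^{-1}(x)$ meets $W$ in at least $j$ points is sound. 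Your remark that $(X_{i,\leq r},X_{i,>r})$ is not an admissible covering, which is why a stratification is needed at all, is also exactly right.

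The one step that does not hold up as written is the last bridging claim: that distinct subgroups $L$ give distinct points $A/L$ of $X_{rig}$ ``because the $\mu_N$-structure with $N\geq 3$ rigidifies the moduli problem''. Rigidity kills automorphisms of a fixed tuple $(A,i,\phi,H)$; it says nothing about an isomorphism $\theta:(A/L_1,\dots)\simeq (A/L_2,\dots)$, which produces not an automorphism of $A$ but an $O_F$-lin�aire quasi-isog�nie $v=\pi_2^{-1}\circ\theta\circ\pi_1$ of degree one. At a point with extra endomorphisms such a $v$ need not be trivial: already for an elliptic curve with $\mathrm{End}(E)=\mathbb{Z}[i]$ and $p=(2+i)(2-i)$ one has $E/E[2-i]\simeq E\simeq E/E[2+i]$, so distinct subgroups can have isomorphic quotients, and ruling out that the induced $\mu_N$-structures, polarisations and (crucially) the images of $H$ also match requires a genuine argument --- in this example it is the $\Gamma_0(\pi)$-structure, not rigidity, that obstructs it. Fortunately this step is dispensable: the integer the stratification must control, both in \cite{Bi} and for the decomposition $U_{\pi_i}=U_{\pi_i}^{good}\coprod U_{\pi_i}^{bad}$ used immediately afterwards (where $U_{\pi_i}^{bad}$ \emph{param�tre les suppl�mentaires} $L$), is the number of points of the $p_1$-fibre landing in $W$, i.e.\ the number of bad subgroups $L$ counted as points of the correspondence. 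That is precisely what your construction stratifies, so your proof is complete once $N(x,r)$ is read in this way; the asserted equality with the naive count of distinct image points should either be dropped or proved honestly, since as justified it is a gap.
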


\begin{proof}
Voir \cite{Bi} lemme $2.7$.
\end{proof}

Sur $\mathcal{U}_j \backslash \mathcal{U}_{j+1}$, on a $N(x,r)=j$. On peut alors décomposer l'opérateur $U_{\pi_i}$ en $U_{\pi_i}^{good} \coprod U_{\pi_i}^{bad}$, où $U_{\pi_i}^{bad}$ correspond aux $j$ points de $X_{i,\leq r}$, et $U_{\pi_i}^{good}$ aux autres. Remarquons que $U_{\pi_i}^{bad}$ paramètre les supplémentaires $L$ de $H_i$ avec deg $L \geq f_i -r$. De plus, il est possible de faire surconverger ces ouverts.

\begin{prop}
Soit $r'>r$ un nombre rationnel, et $\mathcal{U}_j' := \{ x \in \mathcal{U}, N(x,r') \geq~j \}$. Alors $\mathcal{U}_j'$ est un voisinage strict de $\mathcal{U}_j$ dans $\mathcal{U}$, c'est-à-dire que le recouvrement $(\mathcal{U}_j',\mathcal{U} \backslash \mathcal{U}_j)$ de $\mathcal{U}$ est admissible.
\end{prop}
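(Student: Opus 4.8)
The plan is to exhibit an explicit quasi-compact open $V$ squeezed between the two pieces, namely $\mathcal{U}\setminus\mathcal{U}_j' \subset V \subset \mathcal{U}\setminus\mathcal{U}_j$, and then to deduce the admissibility of $(\mathcal{U}_j',\mathcal{U}\setminus\mathcal{U}_j)$ from that of the two-term quasi-compact covering $(\mathcal{U}_j',V)$. The starting observation is that over the rigid fibre, hence in characteristic $0$, the group $A[\pi_i]$ is finite \'etale of order $p^{2f_i}$ over each point, so $U_{\pi_i}(x)$ is a finite set of constant cardinality $d_0 := p^{f_i}$ (the number of $\mathbb{F}_{p^{f_i}}$-lines of $A[\pi_i]$ distinct from $H_i$), uniformly in $x \in \mathcal{U}$. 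This uniform count lets one pass freely between the sublevel count $N(x,t)=\#(U_{\pi_i}(x)\cap X_{i,\leq t})$ and the superlevel count $M(x,s):=\#(U_{\pi_i}(x)\cap X_{i,\geq s})$, through the relation $N(x,t)+\#(U_{\pi_i}(x)\cap X_{i,>t})=d_0$.

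First I would set $V:=\{x\in\mathcal{U}:M(x,r')\geq d_0-j+1\}$. Since $X_{i,\geq r'}=Deg_i^{-1}([r',f_i])$ is a quasi-compact open (a one-sided closed interval with rational bound), the same argument as in the preceding proposition (\cite{Bi} lemme $2.7$), applied with $X_{i,\geq r'}$ in place of $X_{i,\leq r}$, shows that $V$ is a quasi-compact open of $\mathcal{U}$. I would then verify the two inclusions. If $x\notin\mathcal{U}_j'$ then $N(x,r')\leq j-1$, so $\#(U_{\pi_i}(x)\cap X_{i,>r'})\geq d_0-j+1$, whence $M(x,r')\geq d_0-j+1$ and $x\in V$; this gives $\mathcal{U}\setminus\mathcal{U}_j'\subset V$. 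Conversely, since $r<r'$ the sets $X_{i,\leq r}$ and $X_{i,\geq r'}$ are disjoint, so $N(x,r)+M(x,r')\leq d_0$; hence $x\in V$ forces $N(x,r)\leq j-1$, i.e. $x\notin\mathcal{U}_j$, giving $V\subset\mathcal{U}\setminus\mathcal{U}_j$.

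I would conclude by a soft admissibility argument. By the two inclusions, $\mathcal{U}_j'\cup V=\mathcal{U}$, and both $\mathcal{U}_j'$ and $V$ are quasi-compact admissible opens, so the two-term covering $(\mathcal{U}_j',V)$ of the quasi-compact space $\mathcal{U}$ is admissible. Moreover $\mathcal{U}\setminus\mathcal{U}_j=\bigcup_{s>r}\{x:M(x,s)\geq d_0-j+1\}$ is an increasing union of quasi-compact opens, hence an admissible open, and $(\mathcal{U}_j',V)$ refines $(\mathcal{U}_j',\mathcal{U}\setminus\mathcal{U}_j)$ because $V\subset\mathcal{U}\setminus\mathcal{U}_j$. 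As a covering by admissible opens that admits an admissible refinement is itself admissible, $(\mathcal{U}_j',\mathcal{U}\setminus\mathcal{U}_j)$ is admissible, which is exactly the assertion that $\mathcal{U}_j'$ is a strict neighbourhood of $\mathcal{U}_j$ in $\mathcal{U}$.

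The main obstacle, and the only genuinely delicate point, is the constancy of the fibre cardinality $d_0$: the clean complement bookkeeping above collapses at once if the number of points of $U_{\pi_i}(x)$, counted without multiplicity, is allowed to vary with $x$. I would therefore take care to justify that over the rigid (characteristic $0$) fibre distinct supplements $L$ yield genuinely distinct points of $U_{\pi_i}(x)$, and that there are exactly $p^{f_i}$ of them; everything else is a formal manipulation of the degree function and of quasi-compact admissible coverings.
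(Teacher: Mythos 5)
The paper gives no argument for this statement (its ``proof'' is the citation of \cite{Bi}, proposition 2.10), and your skeleton --- an intermediate quasi-compact open $V$ defined by the complementary count, the two inclusions $\mathcal{U}\setminus\mathcal{U}_j'\subset V\subset\mathcal{U}\setminus\mathcal{U}_j$, then admissibility of the finite covering $(\mathcal{U}_j',V)$ and the refinement axiom --- is exactly the standard argument in this line of work. There is, however, a genuine gap at the point you yourself single out as delicate, and your proposed repair goes in the wrong direction. What you would need is constancy of the cardinality of $U_{\pi_i}(x)=p_2(p_1^{-1}(x))$ \emph{as a subset of} $X_{rig}$; but two distinct supplements $L\neq L'$ of $H_i$ may a priori yield isomorphic quadruples $(A/L,\dots)\simeq(A/L',\dots)$ at special (CM-type) points --- the rigidity coming from the $\mu_N$-structure does not obviously exclude this --- and at such a point $\#U_{\pi_i}(x)<p^{f_i}$, the identity $N(x,t)+\#\bigl(U_{\pi_i}(x)\cap X_{i,>t}\bigr)=d_0$ fails, and your inclusion $\mathcal{U}\setminus\mathcal{U}_j'\subset V$ collapses. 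The correct fix is not to prove distinctness of image points but to perform every count on the correspondence: $N(x,r)$ and $M(x,s)$ should count points of the fibre $p_1^{-1}(x)$, i.e.\ supplements $L$, whose image under $p_2$ satisfies the degree condition. Then constancy is automatic, because $p_1$ is finite \'etale of degree exactly $p^{f_i}$ in characteristic zero; your bookkeeping goes through verbatim with $d_0=p^{f_i}$, and this is in any case the formulation one needs for the decomposition $U_{\pi_i}=U_{\pi_i}^{good}\coprod U_{\pi_i}^{bad}$ of the operator, which lives on the correspondence and not on its image. (The paper's own phrasing of $N(x,r)$ invites your reading, but the quasi-compactness statement it quotes from \cite{Bi}, lemme 2.7, is likewise a statement about fibre counts.)

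A secondary soft spot is your claim that $\mathcal{U}\setminus\mathcal{U}_j=\bigcup_{s>r}\{x:M(x,s)\geq d_0-j+1\}$ is ``an increasing union of quasi-compact opens, hence an admissible open''. That implication is not an available general principle in rigid geometry: already its covering version is false, since the closed unit disc is the increasing union of the quasi-compact opens $\{|T|\leq|p|^{1/n}\}\cup\{|T|=1\}$, yet this covering is not admissible (any finite affinoid refinement would lie in a single term, which omits classical points of the disc). So the admissibility of $\mathcal{U}\setminus\mathcal{U}_j$ as an admissible open requires an actual argument; the clean one is that the complement of a quasi-compact admissible open in a quasi-compact rigid space is an admissible open (via Gerritzen--Grauert and the maximum principle), or one can simply invoke \cite{Bi}, where the strict-neighbourhood statements for the degree function are established. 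With these two repairs your proof is correct and agrees with the argument of the cited reference.
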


\begin{proof}
Voir \cite{Bi} proposition $2.10$.
\end{proof}

Pour $r'>r$, on dispose donc de la décomposition de $U_{\pi_i}$ sur $\mathcal{U}_j \backslash \mathcal{U}_{j+1}$, ainsi que sur $\mathcal{U}_j' \backslash \mathcal{U}_{j+1}'$. Ces décompositions coïncident sur l'intersection des deux ensembles. \\
Il est possible de généraliser cette décomposition à $U_{\pi_i}^N$ pour tout entier $N$.

\begin{theo} \label{bigtheo}
Soit $N \geq 1$ et $r \in [0,f_i]$ un rationnel. Il existe un ensemble fini totalement ordonné $S_N$ et une suite décroissante d'ouverts quasi-compacts $(\mathcal{U}_j (N))_{i \in S_N}$ de $\mathcal{U}$ de longueur $L=L(N)$ indépendante de $\mathcal{U}$, tels que pour tout $j\geq 0$, on peut décomposer la correspondance $U_{\pi_i}^N$ sur $\mathcal{U}_{j}(N) \backslash \mathcal{U}_{j+1}(N)$ en 
$$ U_{\pi_i}^N = \left ( \coprod_{k=0}^{N-1} U_{\pi_i}^{N-1-k} \circ T_k  \right ) \coprod T_N$$
avec $T_0 = U_{\pi_i,j,N}^{good}$, pour $0 < k < N$
$$T_k = \coprod_{j_1 \in S_{N-1}, \dots, j_k \in S_{N-k}}  U_{\pi_i,j_k,N}^{good} U_{\pi_i,j_{k-1},j_k,N}^{bad} \dots U_{\pi_i,j,j_1,N}^{bad}$$
et
$$T_N = \coprod_{j_1 \in S_{N-1}, \dots, j_{N-1} \in S_1} U_{\pi_i,j_{N-1},N}^{bad} U_{\pi_i,j_{N-2},j_{N-1},N}^{bad} \dots U_{\pi_i,j,j_1,N}^{bad}$$  
avec
\begin{itemize}
\item les images des opérateurs $U_{\pi_i,j,N}^{good}$ ($j \in S_k$) sont incluses dans $X_{i,\geq r}=\{ x \in~X_{rig}, Deg_i (x) \geq~r \}$
\item les opérateurs $U_{\pi_i,j,l,N}^{bad}$ ($j \in S_k$, $l \in S_{k-1})$ et $U_{\pi_i,j,N}^{bad}$ ($j \in S_1$) sont obtenus en quotientant par un sous-groupe $L$ de degré supérieur à $f_i-r$. 
\end{itemize}
Enfin, si $(\mathcal{U}_j'(N))$ est la stratification de $\mathcal{U}$ obtenue pour $r'>r$, alors $\mathcal{U}_j'(N)$ est un voisinage strict de $\mathcal{U}_j(N)$ dans $\mathcal{U}$ pour tout $j$.
\end{theo}

\begin{proof}
C'est le théorème $2.15$ de \cite{Bi}.
\end{proof}

\subsection{Normes}

Pour démontrer le théorème de classicité, nous aurons besoin d'un calcul de normes de ces opérateurs de Hecke. Rappelons que la norme d'un opérateur $T : H^0(T(\mathcal{U}),\mathcal{F}) \to H^0(\mathcal{U},\mathcal{F})$ est défini par
$$ \left\| T \right\|_{\mathcal{U}} := \inf \left\{ \lambda \in \mathbb{R}_{>0} , |Tf|_\mathcal{U} \leq \lambda |f|_{T(\mathcal{U})} \forall f \in H^0(T(\mathcal{U}),\mathcal{F}) \right\}$$

\begin{prop} \label{lemnorm}
Soit $T$ un opérateur défini sur un ouvert $\mathcal{U}$, égal à $U_{\pi_i}$, $U_{\pi_i}^{good}$ ou $U_{\pi_i}^{bad}$. On suppose que cet opérateur ne fait intervenir que des sous-groupes $L$ de $A[\pi_i]$ avec deg $L \geq c$, pour un certain $c \geq 0$. Alors
$$ \Vert T \Vert_\mathcal{U} \leq p^{f_i-c \inf_{\tau \in \Sigma_i} k_{\tau}}$$
\end{prop}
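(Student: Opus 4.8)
The plan is to reduce to a pointwise estimate and to isolate the three contributions to $T$: the normalisation $N(\pi_i)^{-1}$, the pullback $\pi_{\pi_i}^\kappa$ coming from the universal isogeny, and the trace $Tr_{p_1}$. By the very definition of the Hecke operator, for $f\in H^0(T(\mathcal{U}),\omega^\kappa)$ and an $L$-point $x=(A,i,\phi,H,\omega_{A,\sigma,j})$ of $\mathcal{U}$ one has
$$(Tf)(x) = N(\pi_i)^{-1}\sum_L \pi_{\pi_i}^\kappa\big(f(A/L)\big),$$
where $L$ runs over the subgroups parametrised by $T$ in the fibre $p_1^{-1}(x)$ (all admissible $L$ for $T=U_{\pi_i}$, and the relevant subset for $U_{\pi_i}^{good}$ or $U_{\pi_i}^{bad}$). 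Working non-archimedeanly, the norm of a sum is bounded by the maximum of the norms, so
$$|(Tf)(x)| \leq |N(\pi_i)^{-1}|\cdot \max_L \big|\pi_{\pi_i}^\kappa(f(A/L))\big|.$$
Since $N(\pi_i)=p^{f_i}$, we have $|N(\pi_i)^{-1}|=p^{f_i}$, and it remains to control, for each $L$, the factor by which $\pi_{\pi_i}^\kappa$ rescales norms.

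For the local computation, the universal isogeny $A\to A/L$ induces on invariant differentials an $O_F$-equivariant exact sequence $0\to\omega_{A/L}\to\omega_A\to\omega_L\to 0$; as $L\subset A[\pi_i]$, only the components indexed by $\Sigma_i$ are affected. Being $O_F$-linear, $\pi_{\pi_i}^*$ respects the filtrations defining $X_\mathfrak{c}$, hence induces on each rank-one graded piece $\omega_{A/L,\sigma,j}/\omega_{A/L,\sigma,j-1}\to\omega_{A,\sigma,j}/\omega_{A,\sigma,j-1}$ multiplication by a scalar $\lambda_\tau$ of valuation $v_\tau\geq 0$, where $\tau=\sigma_j\in\Sigma_i$. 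Fixing integral generators through the formal model $\tilde x:\mathrm{Spf}\,O_L\to\mathfrak{X}$ that defines $|\cdot|_x$, the map $\pi_{\pi_i}^\kappa$ multiplies a section of $\omega^\kappa$ by $\prod_{\tau\in\Sigma_i}\lambda_\tau^{k_\tau}$, whence
$$\big|\pi_{\pi_i}^\kappa(f(A/L))\big| = |f(A/L)|\cdot p^{-\sum_{\tau\in\Sigma_i}k_\tau v_\tau}\leq |f|_{T(\mathcal{U})}\cdot p^{-\sum_{\tau\in\Sigma_i}k_\tau v_\tau}.$$
Since the determinant of a filtered map is the product of the induced maps on the graded pieces, the valuation of $\omega_{L,\sigma}$ equals $\sum_j v_{\sigma_j}$; summing over $\sigma\in S_i$ and using Fargues' definition of the degree (\cite{Fa}) gives the key identity $\deg L=\sum_{\tau\in\Sigma_i}v_\tau$.

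The final step is to optimise. Since every $v_\tau\geq 0$, one has $\sum_{\tau\in\Sigma_i}k_\tau v_\tau\geq(\inf_{\tau\in\Sigma_i}k_\tau)\sum_{\tau\in\Sigma_i}v_\tau=(\inf_{\tau\in\Sigma_i}k_\tau)\deg L$, and the hypothesis $\deg L\geq c$ together with $\inf_\tau k_\tau\geq 0$ yields $\sum_\tau k_\tau v_\tau\geq c\inf_{\tau\in\Sigma_i}k_\tau$. Therefore $\max_L p^{-\sum_\tau k_\tau v_\tau}\leq p^{-c\inf_{\tau\in\Sigma_i}k_\tau}$, and combining with the factor $p^{f_i}$,
$$|(Tf)(x)|\leq p^{f_i-c\inf_{\tau\in\Sigma_i}k_\tau}\cdot|f|_{T(\mathcal{U})}.$$
Taking the supremum over $x\in\mathcal{U}$ gives $\Vert T\Vert_\mathcal{U}\leq p^{f_i-c\inf_{\tau\in\Sigma_i}k_\tau}$, uniformly for $T=U_{\pi_i}$, $U_{\pi_i}^{good}$ and $U_{\pi_i}^{bad}$, since these differ only in the set of $L$ entering the maximum, all of which satisfy $\deg L\geq c$.

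The main obstacle I foresee is the clean identity $\deg L=\sum_{\tau\in\Sigma_i}v_\tau$: one must verify that $\pi_{\pi_i}^*$ really respects the $\sigma$-filtrations, so that its determinant factors through the graded pieces, and that the Fargues degree of $L$ is computed by the cokernel $\omega_L$ equipped with its $O_F$-decomposition. Everything else reduces to the ultrametric bound on a trace and the elementary inequality $\sum k_\tau v_\tau\geq(\inf k_\tau)\sum v_\tau$.
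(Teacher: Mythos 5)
Your proof is correct and follows essentially the same route as the paper: a pointwise estimate via the exact sequence $0\to\omega_{A/L}\to\omega_A\to\omega_L\to 0$, the fact that $\pi_{\pi_i}^*$ respects the canonical filtration so its determinant is the product of the scalars on the graded pieces, the identity $\deg L=\sum_{\sigma\in S_i}v(\det\pi^*_{\pi_i,\sigma})$ from Fargues' theory, and the bound $\sum_\tau k_\tau v_\tau\geq(\inf_\tau k_\tau)\deg L$. The only difference is cosmetic: you spell out explicitly the ultrametric combination of the normalisation $N(\pi_i)^{-1}$, the trace, and the pullback, which the paper leaves implicit after bounding $\Vert\pi_{\pi_i}^\kappa\Vert$ by $p^{-c\inf_\tau k_\tau}$.
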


\begin{proof}
Soit $\mathfrak{m} =\pi_i$, $x=(A,i,\phi,H,\omega_{A,\sigma,j}) \in X_{rig}(\overline{K}) $ et $L \subset A[\mathfrak{m}]$ un supplémentaire de $H[\mathfrak{m}]$ stable par $O_F$. Alors le morphisme $\pi_\mathfrak{m} : A \to A/L$ donne une suite exacte de $O_{\overline{K}} \otimes_{\mathbb{Z}} O_F$-modules
$$0 \to \omega_{A/L} \overset{\pi_\mathfrak{m}^*}{\to} \omega_A \to \omega_L \to 0$$
En décomposant cette suite exacte selon les éléments de $S_k$ pour tout $1\leq k \leq g$, on obtient pour $\sigma \in S_k$
$$0 \to \omega_{A/L,\sigma} \overset{\pi^*_{\mathfrak{m},\sigma}}{\to} \omega_{A,\sigma} \to \omega_{L,\sigma} \to 0$$
De plus, $\pi^*_{\mathfrak{m},\sigma}$ est un isomorphisme si $\sigma \notin S_i$, et $\sum_{\sigma \in S_i} v(\det \pi^*_{\mathfrak{m},\sigma}) = $deg $L$. Puisque l'on travaille avec une variété abélienne définie sur $O_{\overline{K}}$, qui est plat sur $O_K$, le module $\omega_{A,\sigma}$ est canoniquement filtré par suivant les éléments de $\Sigma_i$ dont la restriction à $F_{\pi_i}^{nr}$ est $\sigma$. La filtration $(\omega_{A,\sigma,j}$ est donc canonique. Rappelons que $\omega_{A,\sigma,j}$ est un $O_{\overline{K}}$-module libre de rang $j$, qui est un facteur direct de $\omega_{A,\sigma}$, et tel que l'action de $O_F$ sur $\omega_{A,\sigma,j} / \omega_{A,\sigma,j-1}$ se factorise par $\sigma_j$. On obtient de même une filtration canonique $\omega_{A/L,\sigma,j}$ de $\omega_{A/L,\sigma}$. \\
Le morphisme $\pi^*_{\mathfrak{m},\sigma}$ respecte cette filtration ; on note $\lambda_{\sigma,j}$ le déterminant de\\ 
$\pi^*_{\mathfrak{m},\sigma,j} : \omega_{A/L,\sigma,j} / \omega_{A/L,\sigma,j-1} \to \omega_{A,\sigma,j} / \omega_{A,\sigma,j-1}$. Si $u_\sigma$ désigne le déterminant de $\pi^*_{\mathfrak{m},\sigma}$, on a donc 
$$u_\sigma = \prod_{j=1}^{e_i} \lambda_{\sigma,j}$$
En utilisant les notations de la partie $\ref{heckedef}$, on veut calculer la norme du morphisme $\pi_\mathfrak{m}^\kappa : p_2^* \omega^\kappa \to p_1^*\omega^\kappa$. Il suffit de calculer cette norme point par point. Or au-dessus de $x$, la norme de ce faisceau est égal à
$$\Vert \pi_\mathfrak{m}^\kappa \Vert_x = | \prod_{\sigma \in S_i} \prod_{j=1}^{e_i} \lambda_{\sigma,j}^{k_{\sigma_j}} |$$
En effet, calculer la norme de ce morphisme au-dessus de $x$ revient à calculer la norme du morphisme induit entre les faisceaux définis sur le schéma formel $\mathfrak{X}$ au-dessus de $\tilde{x}$. D'après ce qui précède, ce dernier morphisme est la multiplication par $\prod_{\sigma \in S_i} \prod_{j=1}^{e_i} \lambda_{\sigma,j}^{k_{\sigma_j}}$. \\
On en déduit que 
$$\Vert \pi_\mathfrak{m}^\kappa \Vert_x = p^{ - \sum_{\sigma \in S_i} \sum_{j=1}^{e_i} v(\lambda_{\sigma,j}) k_{\sigma_j} } \leq  p^{ - \inf_{\tau \in \Sigma_i} k_{\tau} \sum_{\sigma \in S_i} \sum_{j=1}^{e_i} v(\lambda_{\sigma,j}) }$$
Or $\sum_{j=1}^{e_i} v(\lambda_{\sigma,j}) = v(u_\sigma)$, et $\sum_{\sigma \in S_i} v(u_\sigma) = $deg $L$. Puisque le degré de $L$ est supérieur ou égal à $c$, on obtient que la norme de $\pi_\mathfrak{m}^\kappa : H^0 (\mathcal{U},p_2^* \omega^\kappa) \to H^0(\mathcal{U},p_1^*\omega^\kappa)$ est inférieure à $p^{-c \inf_{\tau \in \Sigma_i} k_\tau}$, pour tout ouvert $\mathcal{U}$ de $C_{\mathfrak{m},rig}$.
\end{proof}

\section{Classicité de formes surconvergentes} \label{classicite}

Cette partie est consacrée à la démonstration du théorème principal.

\begin{theo} \label{maintheorem}
Soit $f$ une forme modulaire surconvergente de poids $\kappa \in \mathbb{Z}^\Sigma$. Supposons que $f$ soit propre pour les opérateurs de Hecke $U_{\pi_i}$, de valeurs propres $a_i$, et que l'on ait pour tout $1\leq i \leq g$
$$e_i(v(a_i) + f_i) < \inf_{\sigma \in \Sigma_i} k_\sigma$$
Alors $f$ est classique.
\end{theo}

Une forme modulaire est définie sur un espace du type $X_{\geq (f_1-\epsilon, \dots, f_g-\epsilon)}$, pour un certain $\epsilon >0$. Pour montrer que $f$ est classique, nous allons tout d'abord prolonger $f$ à tout $X_{rig}$. Le prolongement se fera direction par direction, c'est-à-dire que l'on prolongera $f$ à \\
$Deg^{-1} ([0,f_1] \times [f_2 - \epsilon,f_2] \times \dots \times [f_g - \epsilon,f_g])$, puis à \\
$Deg^{-1} ([0,f_1] \times [0,f_2] \times [f_3 - \epsilon,f_3] \times \dots \times [f_g -\epsilon,f_g])$, et ainsi de suite. \\
Chacune de ses étapes se démontrant de manière analogue, nous ne détaillerons que la première, c'est-à-dire le prolongement à $Deg^{-1} ([0,f_1] \times [f_2 - \epsilon,f_2] \times \dots \times [f_g - \epsilon,f_g])$.

\subsection{Prolongement automatique}

Soit $f$ une forme modulaire surconvergente vérifiant les hypothèses du théorème $\ref{maintheorem}$. Elle est donc définie sur $X_{\geq (f_1-\epsilon, \dots, f_g-\epsilon)}$, pour un certain $\epsilon >0$. Pour tout intervalle $I$, notons $\mathcal{U}_I :=Deg^{-1} (I \times [f_2 - \epsilon,f_2] \times \dots \times [f_g - \epsilon,f_g])$. La forme $f$ est donc définie sur $\mathcal{U}_{[f_1-\epsilon,f_1]}$. Nous allons prolonger $f$ à $\mathcal{U}_{]f_1-\frac{1}{e_1},f_1]}$. 

\begin{prop}
Il est possible de prolonger $f$ à $\mathcal{U}_{]f_1-\frac{1}{e_1},f_1]}$.
\end{prop}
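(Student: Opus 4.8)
Le plan est d'adapter au cadre présent la méthode de prolongement automatique de Buzzard, en exploitant la dynamique de l'opérateur $U_{\pi_1}$ décrite dans la proposition \ref{dyna}. Je poserais $k = e_1 f_1 - 1$, de sorte que $\frac{k}{e_1} = f_1 - \frac{1}{e_1}$ et $\frac{k+1}{e_1} = f_1$ : l'intervalle $]f_1 - \frac{1}{e_1}, f_1[$ est donc exactement l'un des intervalles ouverts entre deux multiples consécutifs de $\frac{1}{e_1}$ sur lesquels la proposition \ref{dyna} s'applique. Quitte à diminuer $\epsilon$, je supposerais $\epsilon < \frac{1}{e_1}$, et que l'équation aux valeurs propres $U_{\pi_1} f = a_1 f$ est vérifiée sur $\mathcal{U}_{[f_1 - \epsilon, f_1]}$ ; ceci est licite car $U_{\pi_1}$ stabilise le domaine de définition, d'après la proposition affirmant que $U_{\pi_1}$ ne modifie pas les composantes $Deg_j$ pour $j \neq 1$ et augmente $Deg_1$. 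L'hypothèse de pente assure en particulier que $a_1 \neq 0$.

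Je fixerais ensuite un rationnel $\alpha$ avec $f_1 - \frac{1}{e_1} < \alpha < f_1 - \epsilon$, ainsi qu'un rationnel $\beta$ vérifiant $f_1 - \epsilon \leq \beta < f_1$. D'après la proposition \ref{dyna}, il existe un entier $N$ tel que $U_{\pi_1}^N (X_{1, \geq \alpha}) \subset X_{1, \geq \beta}$. Comme $U_{\pi_1}$ laisse inchangées les composantes $Deg_j$ pour $j \neq 1$, j'en déduirais que $U_{\pi_1}^N(\mathcal{U}_{[\alpha, f_1]}) \subset \mathcal{U}_{[\beta, f_1]} \subset \mathcal{U}_{[f_1-\epsilon, f_1]}$, domaine sur lequel $f$ est définie. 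La section $U_{\pi_1}^N f$ est alors bien définie sur l'ouvert quasi-compact $\mathcal{U}_{[\alpha, f_1]}$, et je poserais $\tilde f_\alpha := a_1^{-N} U_{\pi_1}^N f$.

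Il resterait à vérifier que cette section ne dépend pas du choix de $N$ (pour $N$ assez grand), puis qu'elle prolonge $f$ et que les $\tilde f_\alpha$ se recollent. Pour l'indépendance, pour $N' \geq N$ j'écrirais $a_1^{-N'} U_{\pi_1}^{N'} f = a_1^{-N} U_{\pi_1}^N \big( a_1^{-(N'-N)} U_{\pi_1}^{N'-N} f \big)$ ; comme l'image de $\mathcal{U}_{[\alpha, f_1]}$ par $U_{\pi_1}^N$ est contenue dans $\mathcal{U}_{[f_1-\epsilon, f_1]}$, où l'équation aux valeurs propres donne $a_1^{-(N'-N)} U_{\pi_1}^{N'-N} f = f$, on obtient l'égalité souhaitée. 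Cette indépendance entraîne que les $\tilde f_\alpha$ coïncident sur les intersections lorsque $\alpha$ varie ; et en restreignant à $\mathcal{U}_{[f_1-\epsilon, f_1]}$, où $U_{\pi_1}^N f = a_1^N f$, on voit que $\tilde f_\alpha$ prolonge $f$. Les sections se recollent donc sur le recouvrement admissible des $\mathcal{U}_{[\alpha, f_1]}$, et leur réunion lorsque $\alpha \to (f_1 - \frac{1}{e_1})^+$ fournit le prolongement cherché à $\mathcal{U}_{]f_1 - \frac{1}{e_1}, f_1]}$.

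Le point à surveiller est que $U_{\pi_1}^N f$ soit bien défini comme section sur $\mathcal{U}_{[\alpha, f_1]}$ : par construction de l'opérateur de Hecke itéré, ceci ne requiert la connaissance de $f$ que sur l'image totale $U_{\pi_1}^N(\mathcal{U}_{[\alpha, f_1]})$, et non sur les images intermédiaires, qui peuvent rester sous le seuil $f_1 - \epsilon$. C'est précisément la proposition \ref{dyna} qui garantit que cette image totale atterrit dans le lieu où $f$ est déjà définie, et c'est ce qui rend le prolongement \emph{automatique}. On soulignera qu'aucune estimation de norme n'intervient à ce stade : seule la non-nullité de $a_1$ est utilisée, l'hypothèse de pente ne servant que pour le franchissement ultérieur de la barrière $f_1 - \frac{1}{e_1}$, où la dynamique de $U_{\pi_1}$ se bloque et où il faudra recourir à la décomposition du théorème \ref{bigtheo} et à la proposition \ref{lemnorm}.
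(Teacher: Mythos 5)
Votre preuve est correcte et suit essentiellement la même démarche que celle du papier : on utilise la proposition \ref{dyna} pour trouver $N$ tel que $U_{\pi_1}^N$ envoie $\mathcal{U}_{[\alpha,f_1]}$ dans le domaine de définition de $f$, on pose $a_1^{-N}U_{\pi_1}^N f$, et on recolle sur le recouvrement admissible des $\mathcal{U}_{[\alpha,f_1]}$ lorsque $\alpha$ tend vers $f_1-\frac{1}{e_1}$. Les vérifications supplémentaires que vous explicitez (indépendance en $N$, compatibilité des recollements, le fait que seule la non-nullité de $a_1$ intervient) sont implicites dans le texte et tout à fait correctes.
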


\begin{proof}
Soit $\alpha$ un rationnel avec $0<\alpha<\frac{1}{e_1}$. D'après la proposition $\ref{dyna}$, il existe une entier $N$ tel que 
$$U_{\pi_1}^N (\mathcal{U}_{[f_1-\alpha,f_1]}) \subset \mathcal{U}_{[f_1-\epsilon,f_1]}$$
La fonction $f_\alpha = a_1^{-N} U_{\pi_1}^N f$ est donc définie sur $\mathcal{U}_{[f_1-\alpha,f_1]}$, et est égale à $f$ sur $\mathcal{U}_{[f_1-\epsilon,f_1]}$. Nous noterons donc encore $f$ cette fonction. De plus, les $(\mathcal{U}_{[f_1-\alpha,f_1]})$ pour $0<\alpha<\frac{1}{e_1}$ forment un recouvrement admissible de $\mathcal{U}_{]f_1-\frac{1}{e_1},f_1]}$. On peut donc étendre $f$ à ce dernier intervalle.
\end{proof}

\begin{rema}
Pour démontrer cette proposition, nous avons seulement utilisé le fait que la valeur propre $a_1$ était non nulle.
\end{rema}

\subsection{Séries de Kassaei}

Dans cette partie, nous prolongeons la forme $f$ à $\mathcal{U}_{[0,f_1]}$. Comme les itérés de l'opérateur $U_{\pi_1}$ n'augmentent pas strictement le degré de $H_1$ sur cet ouvert, la méthode de la partie précédente ne s'applique pas. Nous allons construire des séries $f_n$, analogues de celles introduites par Kassaei dans \cite{Ka}, qui convergeront vers $f$. Pour cela, nous utiliserons la décomposition de l'opérateur $U_{\pi_1}$ réalisée dans la partie $\ref{decompo}$. \\
Soit $\epsilon$ un réel strictement positif tel que $v(a_1) + f_1 < (\frac{1}{e_1}-\epsilon) \inf_{\sigma \in \Sigma_1} k_\sigma$. Cela est possible d'après les hypothèses du théorème $\ref{maintheorem}$. Soit $r$ un nombre rationnel avec $f_1-\frac{1}{e_1} < r < f_1-\frac{1}{e_1}+\epsilon$, et $\mathcal{U}:=\mathcal{U}_{[0,r]}$. \\
Soit $N \geq 1$ un entier ; d'après le théorème $\ref{bigtheo}$, on peut trouver une stratification $(\mathcal{U}_j)_{j \in S_N}$ de $\mathcal{U}$, et une décomposition de $U_{\pi_1}^N$ sur chaque strate $\mathcal{U}_j \backslash \mathcal{U}_{j+1}$. De plus, il est possible de faire surconverger arbitrairement cette stratification. En effet, soit $(r^{(k)})$ une suite strictement croissante de rationnels avec $r^{(0)}=r$, $r^{(k)} < f_1-\frac{1}{e_1}+\epsilon$ pour tout $k$, et $(\mathcal{U}_j^{(k)})$ la stratification correspondante à $r^{(k)}$. Alors $\mathcal{U}_j^{(k+1)}$ est un voisinage strict de $\mathcal{U}_j^{(k)}$ dans $\mathcal{U}$ pour tout $j,k$. \\
Notons $\mathcal{V}_j = \mathcal{U}_j^{(j-1)}$ pour tout $j \geq 1$, et $\mathcal{V}_j'=\mathcal{U}_j^{(j)}$ pour tout $i \geq 0$. Alors $\mathcal{V}_j'$ est un voisinage strict de $\mathcal{V}_j$ dans $\mathcal{U}$. Nous avons décomposé l'opérateur $U_{\pi_1}^N$ sur $\mathcal{V}_j' \backslash \mathcal{V}_{j+1}$ en 
$$U_{\pi_1}^N = \coprod_{k=0}^{N-1} U_{\pi_1}^{N-1-k} T_k \coprod T_N$$
avec $T_0 = U_{\pi_1,j}^{good}$ et pour $0 < k < N$
$$T_k = \coprod_{j_1 \in S_{N-1}, \dots, j_k \in S_{N-k}}  U_{\pi_1,j_k}^{good} U_{\pi_1,j_{k-1},j_k}^{bad} \dots U_{\pi_1,j,j_1}^{bad}$$
et
$$T_N = \coprod_{j_1 \in S_{N-1}, \dots, j_{N-1} \in S_{1}} U_{\pi_1,j_{N-1}}^{bad} U_{\pi_1,j_{N-2},j_{N-1}}^{bad} \dots U_{\pi_1,j,j_1}^{bad}$$  
Les images de $U_{\pi_1,j}^{good}$ et de $U_{\pi_1,j_k}^{good}$ ($j_k \in S_{N-k}$) sont incluses dans $\mathcal{U}_{[r^{(j)},f_1]} \subset \mathcal{U}_{[r,f_1]}$, et les opérateurs $U_{\pi_1,i,j}^{bad}$, $U_{\pi_1,j}^{bad}$ ne font intervenir que des supplémentaires $L$ de degré supérieur à $f_1 -r^{(j)} > \frac{1}{e_1} - \epsilon$. 

\begin{defi}
Les séries de Kassaei sur $\mathcal{V}_j' \backslash \mathcal{V}_{j+1}$ sont définies par
$$f_{N,j} := a_1^{-1} U_{\pi_1,j}^{good} f + \sum_{k=1}^{N-1} \sum_{j_1 \in S_{N-1}, \dots, j_k \in S_{N-k}} a_1^{-k-1} U_{\pi_1,j,j_1}^{bad} \dots U_{\pi_1,j_{k-1},j_k}^{bad} U_{\pi_1,j_k}^{good} f$$
\end{defi}

Cette fonction est bien définie, puisque les opérateurs $U_{\pi_1,j}^{good}$ sont soit nuls, auquel cas leur action sur $f$ donne $0$, soit à valeurs dans $\mathcal{U}_{[r,f_1]}$ et $f$ est définie sur cet espace. Ce dernier espace étant quasi-compact, $f$ y est bornée, disons par $M$. \\
La proposition $\ref{lemnorm}$ permet de majorer la norme des opérateurs $a_1^{-1} U_{p,j,k}^{bad}$ : la norme de ces opérateurs est inférieure à
$$u_0 = p^{f_1+v(a_1) - (\frac{1}{e_1}-\epsilon) \inf_{\sigma \in \Sigma_1} k_{\sigma}} < 1$$

\begin{lemm}
Les fonctions $f_{N,i}$ sont uniformément bornées.
\end{lemm}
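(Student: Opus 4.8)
Il s'agit de montrer que les fonctions $f_{N,j}$ sont born�es par une constante ind�pendante de $N$ et de $j$. Les ingr�dients sont d�j� en place : d'une part, les op�rateurs $U_{\pi_1,j_k}^{good}$ (et $U_{\pi_1,j}^{good}$) sont � valeurs dans $\mathcal{U}_{[r,f_1]}$, o� $f$ est born�e par une constante $M$ ; d'autre part, la proposition $\ref{lemnorm}$ fournit une majoration uniforme $u_0 < 1$ de la norme des op�rateurs $a_1^{-1} U_{\pi_1,\bullet,\bullet}^{bad}$, puisque ces op�rateurs ne font intervenir que des suppl�mentaires de degr� sup�rieur � $f_1 - r^{(j)} > \frac{1}{e_1} - \epsilon$. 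La strat�gie est donc de majorer terme � terme chaque sommande dans la d�finition de $f_{N,j}$ en utilisant ces deux faits, puis de reconna�tre une s�rie g�om�trique convergente.

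\textbf{Les �tapes.} Premi�rement, j'estime la norme du terme de rang $k$ (pour $0 < k < N$)
$$a_1^{-k-1} U_{\pi_1,j,j_1}^{bad} \cdots U_{\pi_1,j_{k-1},j_k}^{bad} U_{\pi_1,j_k}^{good} f.$$
En �crivant $a_1^{-k-1} = a_1^{-1} \cdot (a_1^{-1})^{k}$ et en r�partissant les facteurs $a_1^{-1}$ sur les $k$ op�rateurs $U^{bad}$, chaque bloc $a_1^{-1} U_{\pi_1,\bullet,\bullet}^{bad}$ contribue au plus $u_0$ � la norme, tandis que le facteur restant $a_1^{-1} U_{\pi_1,j_k}^{good} f$ est major� par $|a_1^{-1}|\, M$ (la norme de $U_{\pi_1,j_k}^{good}$ �tant elle aussi contr�l�e par la proposition $\ref{lemnorm}$, mais il suffit d'observer que son image tombe dans $\mathcal{U}_{[r,f_1]}$ o� $f$ est born�e). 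On obtient ainsi que ce terme est de norme au plus $C \, u_0^{k}$ pour une constante $C$ ind�pendante de $N$, $k$ et des indices $j_1, \dots, j_k$. Deuxi�mement, je somme sur les indices $j_1 \in S_{N-1}, \dots, j_k \in S_{N-k}$ : comme chaque ensemble $S_m$ est fini de cardinal born� par une constante $L$ ind�pendante de $N$ (cf. th�or�me $\ref{bigtheo}$, o� la longueur $L(N)$ est ind�pendante de $\mathcal{U}$), le nombre de termes de rang $k$ est au plus $L^{k}$. La norme totale du bloc de rang $k$ est donc born�e par $C\,(L u_0)^{k}$ — mais il faut �tre prudent ici.

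\textbf{Le point d�licat.} L'obstacle principal est de garantir que la s�rie converge malgr� le facteur combinatoire $L^{k}$ provenant du nombre croissant de termes. Si $L u_0 \geq 1$, la majoration na�ve ci-dessus �choue. La bonne observation est que la norme d'une \emph{somme} (un $\coprod$ de correspondances) n'est pas la somme des normes mais leur \emph{supremum} : les diff�rents suppl�mentaires $L$ param�tr�s par les indices $j_1, \dots, j_k$ correspondent � des sous-groupes disjoints, et l'op�rateur de trace $N(\mathfrak{m})^{-1} Tr_{p_1}$ �value $f$ en des points distincts de la fibre. La norme de $U_{\pi_1}^N$ comme op�rateur reste donc contr�l�e par le supremum sur les branches, de sorte que le facteur $L^{k}$ dispara�t et que le terme de rang $k$ est en r�alit� major� par $C\,u_0^{k}$ sans facteur combinatoire. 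Une fois ce point �tabli, la norme de $f_{N,j}$ est born�e par
$$|a_1^{-1}|\, M + C \sum_{k=1}^{N-1} u_0^{k} \leq |a_1^{-1}|\, M + \frac{C\, u_0}{1 - u_0},$$
quantit� finie et ind�pendante de $N$ et de $j$, ce qui �tablit la borne uniforme recherch�e. Il restera, dans la suite du texte, � recoller ces fonctions $f_{N,j}$ sur les strates $\mathcal{V}_j' \backslash \mathcal{V}_{j+1}$ et � faire tendre $N$ vers l'infini, mais cela d�passe l'�nonc� du pr�sent lemme.
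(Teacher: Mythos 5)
Your overall strategy coincides with the paper's: you bound each summand of $f_{N,j}$ by combining the norm estimate of Proposition \ref{lemnorm} for the operators $a_1^{-1}U^{bad}_{\pi_1,\bullet,\bullet}$ (norm at most $u_0<1$, because the subgroups involved have degree $>\frac{1}{e_1}-\epsilon$) with the bound $M$ for $f$ on the quasi-compact set $\mathcal{U}_{[r,f_1]}$; this is exactly how the paper obtains the bound $u_0^k\,|a_1^{-1}|\,p^{f_1}M$ for the term of rank $k$. However, your resolution of the \emph{point d\'elicat} rests on the wrong mechanism. The combinatorial factor $L^k$ does not disappear because the branches of the coproduct involve disjoint subgroups, nor because the trace evaluates $f$ at distinct points of the fiber: if the norm were archimedean, distinctness of the points would be of no help whatsoever, and a sum of $L^k$ terms each of size $u_0^k$ could genuinely be as large as $(Lu_0)^k$. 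What actually kills the factor is simply that the norm here is non-archimedean: $|g+h|\leq\max\left(|g|,|h|\right)$, so the norm of any finite sum of sections is bounded by the supremum of the norms of its terms, independently of any geometric configuration. This is what the paper uses implicitly: every summand of $f_{N,j}$ is bounded by $u_0^k|a_1^{-1}|p^{f_1}M\leq|a_1^{-1}|p^{f_1}M$, hence so is $f_{N,j}$ itself, with no geometric series needed. Your final bound $|a_1^{-1}|M+Cu_0/(1-u_0)$ is still a valid uniform bound (the sum of norms always dominates the ultrametric maximum), so your conclusion stands; but the justification of the supremum step should be replaced by the ultrametric inequality, after which your argument is essentially the paper's.
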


\begin{proof}
On a 
$$ |a_1^{-k-1} U_{\pi_1,j,j_1}^{bad} \dots U_{\pi_1,j_{k-1},j_k}^{bad} U_{\pi_1,j_k}^{good} f |_{\mathcal{V}_j' \backslash \mathcal{V}_{j+1}} \leq u_0^k |a_1^{-1} U_{\pi_1,j_k}^{good} f |_{U_{\pi_1,j_{k-1},j_k}^{bad} \dots U_{\pi_1,j,j_1}^{bad} (\mathcal{V}_j' \backslash \mathcal{V}_{j+1} )} \leq |a_1^{-1} | p^{f_1}  M$$
car la norme de $U_{\pi_1,j_k}^{good}$ est majorée par $p^{f_1}$.
On peut donc majorer la fonction $f_{N,j}$ par
$$ |f_{N,j} |_{\mathcal{V}_j' \backslash \mathcal{V}_{j+1}} \leq |a_1^{-1} | p^{f_1} M$$
ce qui prouve que les fonctions $f_{N,j}$ sont uniformément bornées. 
\end{proof}

Puisque ces fonctions sont bornées, nous pouvons supposer qu'elles sont entières, quitte à multiplier $f$ par une constante. Nous allons maintenant recoller ces fonctions sur $\mathcal{U}$.

\begin{lemm}
Soient $j,k \in S_N$ et $x \in (\mathcal{V}_j' \backslash \mathcal{V}_{j+1}) \cap (\mathcal{V}_k' \backslash \mathcal{V}_{k+1})$. Alors
$$ | (f_{N,j} -  f_{N,k}) (x) | \leq u_0^N M $$
\end{lemm}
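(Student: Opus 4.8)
The plan is to show that the two Kassaei series $f_{N,j}$ and $f_{N,k}$, when evaluated at a common point $x$ of the overlap, differ only by the "tail" terms of length $N$, which are precisely the terms of the full expansion of $a_1^{-N}U_{\pi_1}^N f$ that involve $N$ consecutive bad operators. The key conceptual point is that both $f_{N,j}$ and $f_{N,k}$ are approximations to the same quantity $a_1^{-N} U_{\pi_1}^N f$: indeed, from the decomposition of $U_{\pi_1}^N$ on each stratum recorded in Theorem~$\ref{bigtheo}$, one has on $\mathcal{V}_j' \backslash \mathcal{V}_{j+1}$ the identity
$$
a_1^{-N} U_{\pi_1}^N f = f_{N,j} + a_1^{-N}\!\!\sum_{j_1 \in S_{N-1}, \dots, j_{N-1} \in S_1}\!\! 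U_{\pi_1,j,j_1}^{bad} \dots U_{\pi_1,j_{N-2},j_{N-1}}^{bad} f,
$$
the last sum being exactly $a_1^{-N} T_N f$, and an identical identity holds with $k$ in place of $j$. Thus the difference $f_{N,j} - f_{N,k}$ equals the difference of the two corresponding $T_N$-tails, since the $a_1^{-N}U_{\pi_1}^N f$ terms cancel.

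First I would make rigorous the assertion that $f_{N,j}$ agrees with $a_1^{-N} U_{\pi_1}^N f$ up to the $T_N$ tail. The series $f_{N,j}$ is obtained from the decomposition $U_{\pi_1}^N = \coprod_{k=0}^{N-1} U_{\pi_1}^{N-1-k} T_k \coprod T_N$ by replacing each occurrence of $U_{\pi_1}^{N-1-k}$ acting after a good operator with the eigenvalue factor $a_1^{N-1-k}$ — this is legitimate because a good operator lands in $\mathcal{U}_{[r,f_1]}$, where $f$ is already defined and satisfies $U_{\pi_1} f = a_1 f$. Applying $a_1^{-N} U_{\pi_1}^N$ to $f$ and using $U_{\pi_1}^{N-1-k} U_{\pi_1,j_k}^{good} f = a_1^{N-1-k} U_{\pi_1,j_k}^{good} f$ (valid since the good part lands in the ordinary-multiplicative neighborhood where the eigenform equation holds) collapses the terms $T_0, \dots, T_{N-1}$ precisely into $a_1^N f_{N,j}$, leaving only $T_N f$ uncollapsed. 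The same holds for $k$, so subtracting yields $f_{N,j}-f_{N,k} = a_1^{-N}(T_N^{(k)} f - T_N^{(j)} f)$ where $T_N^{(j)}$ denotes the $T_N$-tail of stratum $j$.

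Next I would bound each $T_N$ tail directly. Every summand of $a_1^{-N} T_N$ is a composite of exactly $N$ bad operators, each preceded by a factor $a_1^{-1}$, and Proposition~$\ref{lemnorm}$ gives $\Vert a_1^{-1} U_{\pi_1,\bullet}^{bad}\Vert \leq u_0 < 1$ because every bad operator quotients by a subgroup $L$ of degree exceeding $f_1 - r^{(j)} > \frac{1}{e_1}-\epsilon$. Hence each summand has norm at most $u_0^N M$ (the $M$ accounting for the bound $|f| \leq M$ on $\mathcal{U}_{[r,f_1]}$, reached after the innermost operator, together with the fact that the bad operators are norm-nonincreasing after the $u_0$ factor is extracted). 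Since the disjoint-union structure means that evaluation at a single point $x$ picks out a single branch rather than summing norms, the nonarchimedean estimate gives $|a_1^{-N} T_N^{(j)} f(x)| \leq u_0^N M$ and likewise for $k$, so the ultrametric inequality delivers $|(f_{N,j}-f_{N,k})(x)| \leq u_0^N M$, as claimed.

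The main obstacle I anticipate is bookkeeping rather than any deep estimate: one must verify carefully that the good-operator images really do land where $f$ is already defined (so that the eigenform relation $U_{\pi_1} f = a_1 f$ may be invoked to collapse the $U_{\pi_1}^{N-1-k}$ factors into powers of $a_1$), and that the two decompositions attached to strata $j$ and $k$ are genuinely restrictions of the single decomposition of $U_{\pi_1}^N$ on the overlap $(\mathcal{V}_j' \backslash \mathcal{V}_{j+1}) \cap (\mathcal{V}_k' \backslash \mathcal{V}_{k+1})$, so that the $a_1^{-N}U_{\pi_1}^N f$ terms are literally equal and cancel. Once these compatibilities are in hand, the norm bound is immediate from Proposition~$\ref{lemnorm}$ and the nonarchimedean triangle inequality.
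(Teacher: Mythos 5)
There is a genuine gap, and it sits exactly at the point your plan leans on most heavily: the quantities $a_1^{-N} U_{\pi_1}^N f$ and $a_1^{-N} T_N^{(j)} f$ that you subtract are not defined at $x$. The tail $T_N$ contains \emph{no} good operator: it is a composite of $N$ bad operators, and by construction the image of each bad operator lies in the low-degree locus (the bad branches are precisely those landing in $X_{1,\leq r^{(j)}}$). Hence the image of $T_N^{(j)}$, applied to $x \in \mathcal{U}_{[0,r]}$, contains points of degree $< r$, where $f$ has not yet been extended --- extending $f$ there is the very purpose of this construction. Your claim that the $M$ in the bound comes from ``$|f| \leq M$ on $\mathcal{U}_{[r,f_1]}$, reached after the innermost operator'' is therefore false for $T_N$: only the terms $T_k$ with $k < N$ end with a good operator landing in $\mathcal{U}_{[r,f_1]}$. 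So the step ``bound each $T_N$ tail directly by $u_0^N M$'' cannot be carried out; the tails, and the quantity $a_1^{-N}U_{\pi_1}^N f$ whose formal cancellation you invoke, are meaningless at $x$, not merely awkward bookkeeping.

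The repair is to work only with the difference, which is what the paper does. Unfold each term $a_1^{-k-1} U^{bad}_{\pi_1,j,j_1}\cdots U^{bad}_{\pi_1,j_{k-1},j_k} U^{good}_{\pi_1,j_k} f$ to full depth $N$ via the eigenform equation (legitimate, since the good operator lands in $\mathcal{U}_{[r^{(j)},f_1]} \subset \mathcal{U}_{[r,f_1]}$ where $U_{\pi_1} f = a_1 f$ holds and degree does not decrease under further applications of $U_{\pi_1}$). Both series then become sums of terms $p^{-Nf_1} a_1^{-N}(\text{composed pullbacks})\, f(\text{endpoint})$ over depth-$N$ chains, and a given chain contributes the \emph{same} term to either series whenever it is counted by both, regardless of where its ``good'' stopping step occurs. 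The difference $f_{N,j} - f_{N,k}$ is therefore a signed sum over only those chains counted by one series and omitted by the other, i.e.\ chains that are all-bad for one cutoff ($r^{(j)}$ or $r^{(k)}$) but not for the other. For such a chain, every one of its $N$ quotient subgroups has degree $> \frac{1}{e_1}-\epsilon$ (all-bad for one classification), \emph{and} its endpoint has degree $\geq r$ (it contains a good step for the other classification), so $f$ is defined and bounded by $M$ there; the pointwise estimate of Proposition~\ref{lemnorm} then gives $u_0^N M$ per term, and the ultrametric inequality concludes. In short: your heuristic (the series differ only by tails) is the right one, but the rigorous argument must identify the surviving terms of the difference chain by chain and verify they are evaluated where $f$ exists --- neither tail makes sense on its own, and that verification is the actual content of the lemma.
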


\begin{proof}
La série de Kassaei évalue la fonction $f$ en certains points de $U_{\pi_1}^N(x)$ avec les règles suivantes : si un point est dans $\mathcal{U}_{[f_1-\frac{1}{e_1}+\epsilon,f_1]}$, il est toujours pris en compte, s'il est dans $\mathcal{U}_{[0,r]}$ il n'est jamais pris en compte. La différence entre deux séries ne peux donc porter que sur des points de $U_{\pi_1}^N(x)$ dont le degré de $H_1$ est compris entre $r$ et $f_1-\frac{1}{e_1}+ \epsilon$. \\
De manière plus précise, supposons que $x=(A,i,\phi,H,\omega_{A,\sigma,j})$. Alors ils existent un entier $k\geq 0$, et pour tout $1 \leq i \leq k$ un élément $\epsilon_i \in \{-1,1\}$, des sous-groupes $L_{i,1} \in A[\pi_1]$, $L_{i,l+1} \subset (A/L_{i,l}) [\pi_1]$ qui sont des supplémentaires de l'image de $H_1$, et tels que pour toute section non nulle $\omega$ de $\omega^\kappa$, on ait
$$f_{N,j}(x,\omega) - f_{N,k}(x,\omega) =p^{-Nf_1} a_1^{-N} \sum_{i=1}^k \epsilon_i f(A/L_{i,N},i',\phi',H', \omega_{i,N})$$
où la suite $(\omega_{i,l})$ est déterminée par l'équation $\pi_{i,l}^* \omega_{i,l+1} = \omega_{i,l}$ avec $\pi_{i,l} : A/L_{i,l} \to~A/L_{i,l+1}$ (en posant $\omega_{i,0}=\omega$ et $L_{i,0}=0$). \\
De plus, on a deg $L_{i,l} > \frac{1}{e_1} - \epsilon$ pour tous $i,l$ et deg $L_{i,N} \leq \frac{1}{e_1} - r$ pour tout $i$. \\
Le calcul sur les normes des opérateurs de Hecke (lemme $\ref{lemnorm}$) montre que l'on a 
$$ | (f_{N,i} -  f_{N,j}) (x) | \leq p^{Nf_1 + Nv(a_1) - N(\frac{1}{e_1}-\epsilon) \inf k_{g,i}} |f|_{\mathcal{U}_{[r,f_1]}} \leq u_0^N M$$
\end{proof}

\begin{prop}
Il existe un entier $A_N$ telle que les fonctions $(f_{N,j})_{j\in S_N}$ se recollent en une fonction $g_N \in H^0(\mathcal{U}, \tilde{\omega}^\kappa / p^{A_N})$.
\end{prop}

\begin{proof}
La décomposition de l'ouvert $\mathcal{U}$ étant finie, soit $L$ tel que $\mathcal{V}_{L+1}$ soit vide. La fonction $f_{N,L}$ est donc définie sur $\mathcal{V}_L'$. La fonction $f_{N,L-1}$ est définie sur $\mathcal{V}_{L-1}' \backslash \mathcal{V}_L$. \\
De plus, d'après le lemme précédent, on a
$$ | f_{N,L-1} - f_{N,L} |_{(\mathcal{V}_L' \cap \mathcal{V}_{L-1}') \backslash \mathcal{V}_L} \leq u_0^N M$$
Soit $A_N$ tel que $u_0^N M \leq p^{-A_N}$ ; comme $u_0 < 1$, la suite $(A_N)$ tend vers l'infini. \\
Les fonctions $f_{N,L-1}$ et $f_{N,L}$ sont donc égales modulo $p^{A_N}$ sur $(\mathcal{V}_L' \cap \mathcal{V}_{L-1}') \backslash \mathcal{V}_L$. Comme $(\mathcal{V}_L' \cap \mathcal{V}_{L-1}' , \mathcal{V}_{L-1}' \backslash  \mathcal{V}_L)$ est un recouvrement admissible de $\mathcal{V}_{L-1}'$ , celles-ci se recollent en une fonction $g_{N,L-1} \in H^0 ( \mathcal{V}_{L-1}', \tilde{\omega}^\kappa / p^{A_N})$. \\
De même, $g_{N,L-1}$ et $f_{N,L-2}$ sont égales (modulo $p^{A_N}$) sur $(\mathcal{V}_{L-2}' \cap \mathcal{V}_{L-1} ') \backslash \mathcal{V}_{L-1}$, et donc se recollent en $g_{N,L-2} \in H^0 (\mathcal{V}_{L-2}' , \tilde{\omega}^\kappa / p^{A_N})$. \\
En répétant ce processus, on voit que les fonctions $f_{N,j}$ se recollent toutes modulo $p^{A_N}$ sur $\mathcal{V}_0' = \mathcal{U}$, et définissent donc une fonction $g_N \in H^0(\mathcal{U}, \tilde{\omega}^\kappa / p^{A_N})$.
\end{proof}

\begin{prop}
Les fonctions $(g_N)$ définissent un système projectif dans $\underset{\leftarrow}\lim $~$ H^0 ( \mathcal{U}, \tilde{\omega}^\kappa / p^m)$.
\end{prop}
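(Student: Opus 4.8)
Le plan est d'\'etablir que les $g_N$ sont deux \`a deux \'egaux modulo des puissances de $p$ tendant vers l'infini, ce qui forcera leurs r\'eductions modulo $p^m$ \`a se stabiliser et fournira un \'el\'ement de la limite projective. Je commencerais donc par comparer les deux s\'eries de Kassaei $f_{N,j}$ et $f_{N+1,j}$. Le point essentiel est la structure r\'ecursive de ces s\'eries~: \'evalu\'ee en un $\overline{K}$-point $x$ de $\mathcal{U}$, la s\'erie $f_{N,j}$ somme les contributions $a_1^{-k-1}(U_{\pi_1}^{bad})^k U_{\pi_1}^{good} f$ des chemins qui atteignent la zone $\mathcal{U}_{[r,f_1]}$, o\`u $f$ est d\'ej\`a d\'efinie, apr\`es $0 \leq k \leq N-1$ pas \emph{mauvais} suivis d'un pas \emph{bon}, et elle n\'eglige le terme $T_N$ constitu\'e de $N$ pas mauvais. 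La s\'erie $f_{N+1,j}$ reprend exactement ces m\^emes contributions et leur ajoute celles des chemins rest\'es mauvais durant les $N$ premiers pas puis devenant bons au pas $N+1$. Ainsi la diff\'erence $f_{N+1,j} - f_{N,j}$ ne fait intervenir que des compositions comportant au moins $N$ op\'erateurs mauvais, suivies d'un unique op\'erateur bon~; de fa\c{c}on \'equivalente, on dispose de la relation $f_{N+1} = a_1^{-1} U_{\pi_1}^{good} f + a_1^{-1} U_{\pi_1}^{bad} f_N$.

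Ensuite, j'invoquerais l'estimation de normes de la proposition \ref{lemnorm}. Chaque op\'erateur normalis\'e $a_1^{-1} U_{\pi_1}^{bad}$ apparaissant ici est de norme au plus $u_0 < 1$, tandis que $a_1^{-1} U_{\pi_1}^{good}$ est de norme au plus $|a_1^{-1}| p^{f_1}$. On en d\'eduit
$$|f_{N+1,j} - f_{N,j}| \leq u_0^N \, |a_1^{-1}| \, p^{f_1} \, M.$$
Les erreurs de recollement utilis\'ees pour construire les $g_N$ \'etant elles aussi major\'ees par $u_0^N M$, on obtient finalement sur $\mathcal{U}$ la majoration
$$|g_{N+1} - g_N| \leq u_0^N \, |a_1^{-1}| \, p^{f_1} \, M =: p^{-B_N},$$
o\`u $B_N \to +\infty$ lorsque $N \to \infty$, puisque $u_0 < 1$. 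Autrement dit, $g_{N+1}$ et $g_N$ sont \'egaux modulo $p^{B_N}$ dans $H^0(\mathcal{U}, \tilde\omega^\kappa / p^{B_N})$.

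J'en conclurais que, pour tout entier $m$ fix\'e, la suite des r\'eductions $(g_N \bmod p^m)_N$ est stationnaire d\`es que $B_N \geq m$, et que ces valeurs limites sont compatibles avec les morphismes de troncation $\tilde\omega^\kappa / p^{m+1} \to \tilde\omega^\kappa / p^m$~; les $g_N$ d\'efinissent donc bien un syst\`eme projectif, de limite dans $\underset{\leftarrow}{\lim}\ H^0(\mathcal{U}, \tilde\omega^\kappa / p^m)$. La principale difficult\'e sera de l\'egitimer la comparaison entre les niveaux $N$ et $N+1$, car les stratifications $(\mathcal{U}_j(N))$ et les ensembles d'indices $S_N$ d\'ependent de $N$~; je la l\`everais en raisonnant point par point sur les $\overline{K}$-points, o\`u la description combinatoire des s\'eries en termes de chemins bons et mauvais rend la diff\'erence transparente, et en utilisant la compatibilit\'e des stratifications surconvergentes garantie par le th\'eor\`eme \ref{bigtheo}.
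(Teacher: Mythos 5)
Votre d\'emonstration est correcte et suit pour l'essentiel la m\^eme voie que celle de l'article : on y compare aussi $f_{N+1,k}$ et $f_{N,j}$ point par point, la diff\'erence \'etant form\'ee des termes \`a $N$ pas mauvais suivis d'un pas bon (le terme $h_2$ de l'article), born\'es par $u_0^N p^{f_1} |a_1^{-1}| M$, ce qui redonne l'ajustement $A_N' = A_N - f_1 - v(a_1)$. Seule pr\'ecision : la difficult\'e que vous signalez (d\'ependance en $N$ des stratifications) est trait\'ee dans l'article non via le th\'eor\`eme \ref{bigtheo}, mais en observant que la troncature $h_1$ de $f_{N+1,k}$ aux $N$ premiers niveaux est elle-m\^eme une s\'erie de Kassaei pour une certaine d\'ecomposition de $U_{\pi_1}^N$, de sorte que le lemme pr\'ec\'edent donne $|(f_{N,j}-h_1)(x)| \leq p^{-A_N}$ --- c'est exactement la comparaison point par point, en termes de chemins bons et mauvais, que vous esquissez.
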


\begin{proof}
Nous allons prouver que $g_{N+1}$ et $g_N$ sont égales modulo $p^{A_N}$. Soit $ x \in \mathcal{U}$ ; nous avons construit en $x$ les séries de Kassaei $f_{N,j}$ et $f_{N+1,k}$. Or le terme $f_{N+1,k}$ provient d'une décomposition de $U_{\pi_1}^{N+1}$ du type
$$U_{\pi_1}^{N+1} = \sum_{l=0}^{N} U_{\pi_1}^{N-l} T_N + T_{N+1}$$
Nous pouvons donc écrire $f_{N+1,k} = h_1 + h_2$, la fonction $h_1$ étant associée à l'opérateur $\sum_{l=0}^{N-1} U_{\pi_1}^{N-1-l} T_N$  
et $h_2$ à $T_N$. \\
Or la fonction $h_1$ est en réalité une série de Kassaei pour une certaine décomposition de $U_{\pi_1}^N$ : le lemme précédent donne donc 
$$ | (f_{N,j} - h_1) (x) | \leq p^{-A_N}$$
De plus, on a
$$h_2 = \sum_{j_1 \in S_N, \dots, j_N \in S_1} a_1^{-N-1} U_{\pi_1,j,j_1}^{bad} \dots U_{\pi_1,j_{N-1},j_N}^{bad} U_{\pi_1,j_N}^{good} f $$
donc comme les opérateurs $a_{1}^{-1} U_{\pi_1,i,l}^{bad}$ ont une norme inférieure à $u_0$, 
$$ | h_2(x) | \leq u_0^N p^{f_1} |a_1^{-1} | M \leq p^{-A_N'}$$
avec $A_N'=A_N - f_1 - v(a_1)$. Quitte à remplacer $A_N$ par $A_N'$, on voit donc que la réduction de $g_{N+1}$ modulo $p^{A_N}$ est égal à $g_N$.
\end{proof}

En utilisant le gluing lemma (lemme $\ref{glue}$), on voit donc que les fonctions $g_N$ définissent une fonction $g \in H^0(\mathcal{U},\omega^\kappa)$. Bien sûr, $g$ coïncide avec $f$ sur $\mathcal{U}_{]f_1-\frac{1}{e_1},f_1]}$. \\
En effet, si $x \in \mathcal{U}_{]f_1-\frac{1}{e_1},f_1]}$, il existe $N_0$ tel que $U_{\pi_1}^{N} (x) \subset \mathcal{U}_{[f_1-\epsilon,f_1]}$ pour $N \geq N_0$, et la série de Kassaei est alors stationnaire égale à 
$$ a_1^{-N_0} U_{\pi_1}^{N_0} f=f$$
Nous pouvons donc étendre $f$ à $\mathcal{U}_{[0,f_1]}$.

\subsection{Fin de la démonstration}

Nous avons étendu $f$ à $\mathcal{U}_{[0,f_1]} = Deg^{-1} ([0,f_1] \times [f_2-\epsilon,f_2] \times \dots \times [f_g-\epsilon,f_g])$. En utilisant le fait que $f$ soit propre pour $U_{\pi_2}$, et en utilisant la relation vérifiée par la valeur propre $a_2$, la même méthode montre que l'on peut étendre $f$ à $Deg^{-1} ([0,f_1] \times [0,f_2] \times [f_3-\epsilon,f_3] \times \dots \times [f_g-\epsilon,f_g])$. En répétant ce processus, on voit donc que l'on peut étendre $f$ à tout $X_{rig}$. Comme $d>1$, le principe de Koecher et GAGA permettent de montrer que
$$H^0(X_K,\omega^\kappa) = H^0(X_{rig},\omega^\kappa)$$
ce qui permet de conclure que $f$ est classique. Nous détaillons ces résultats dans la partie suivante.

\section{Compactifications et principe de Koecher} \label{compact}

\subsection{Compactification toroïdales}

Dans \cite{Ra}, Rapoport a construit des compactifications de la variété de Hilbert sans niveau, ainsi que pour le niveau $\Gamma(N)$. Sa méthode est très générale, et s'adapte à d'autres structures de niveau. Mentionnons que la construction de compactifications dans le cas de structure de niveau $\Gamma_1(N)$ a été fait dans \cite{Di}. \\
Fixons un idéal $\mathfrak{c}$ un idéal de $O_F$ ; nous allons définir les pointes $\mathfrak{c}$-polarisées. Si $\mathfrak{a}$ est un idéal, on note $\mathfrak{a}^* = \delta^{-1} \mathfrak{a}^{-1}$.

\begin{defi}
Une $(R,N,\pi)$-pointe $\mathcal{C}$ est une classe d'équivalence de couples $(\mathfrak{a},\mathfrak{b},L,\lambda,\beta,H)$ où
\begin{itemize}
\item $\mathfrak{a}$ et $\mathfrak{b}$ sont deux idéaux avec $\mathfrak{a}^* \mathfrak{b} = \mathfrak{c}^*$.
\item $L$ est un réseau de $F^2$ avec une suite exacte
$$0 \to \mathfrak{a}^* \to L \to \mathfrak{b} \to 0$$
\item $\lambda : \wedge^2 L \to \mathfrak{c}^*$ est un isomorphisme $O_F$-linéaire (polarisation).
\item $\beta : N^{-1} \delta^{-1} / \delta^{-1} \hookrightarrow N^{-1} L / L$ est un morphisme injectif.
\item $H$ est un sous-groupe de $\pi^{-1} L /L$ de rang $p^f$, tel que $H = \prod_i H_i$, avec $H_i$ un sous-groupe de ${\pi_i}^{-1} L /L$ de rang $p^{f_i}$, pour $1 \leq i \leq g$. 
\end{itemize}
Des couples $(\mathfrak{a},\mathfrak{b},L,\lambda,\beta,H)$ $(\mathfrak{a}',\mathfrak{b}',L',\lambda',\beta',H')$ sont équivalents s'il existe $\xi \in F$ avec
$\mathfrak{a}'=\xi \mathfrak{a}$, $\mathfrak{b}'=\xi \mathfrak{b}$, un isomorphisme $f : L\simeq L'$ respectant les suites exactes définissant $L$ et $L'$ tel que
\begin{itemize}
\item L'isomorphisme $\wedge^2 L \simeq \wedge^2 L'$ induit un automorphisme de $\mathfrak{c}^*$ donné par un élément de $O_F^{\times,+}$ (unités totalement positives).
\item Les structures de niveau pour $L$ et $L'$ sont isomorphes via $f$.
\end{itemize}
\end{defi}

Si $\mathcal{C}=(\mathfrak{a},\mathfrak{b},L,\lambda,\beta,H)$ est une pointe, on note $\mathfrak{b}'$ l'idéal contenant $\mathfrak{b}$, égal à l'image de $\beta$ dans $N^{-1} \mathfrak{b} / \mathfrak{b}$ (pour tout entier $m$, on a une suite exacte $0 \to m^{-1} \mathfrak{a}^* / \mathfrak{a}^* \to m^{-1} L/L \to m^{-1} \mathfrak{b} / \mathfrak{b} \to 0$). Soit $n$ l'exposant du groupe $\mathfrak{b}'/\mathfrak{b}$. \\
De même, soit $\mathfrak{b}_i'$ l'idéal tel que $\mathfrak{b}_i'/\mathfrak{b}$ soit égal à l'image de $H_i$ dans $\pi_i^{-1} \mathfrak{b} / \mathfrak{b}$. Remarquons que $H_i$ est soit isomorphe à $\pi_i^{-1} \mathfrak{a}^* / \mathfrak{a}^*$, soit à $\pi_i^{-1} \mathfrak{b} / \mathfrak{b}$. On notera $\mathfrak{b}''$ le plus petit idéal contenant $\mathfrak{b}'$ et les $\mathfrak{b}_i'$.

\begin{defi}
Une pointe $(\mathfrak{a}',\mathfrak{b}',L',\lambda',\beta',H')$ appartient à la même composante que $\mathcal{C}$ s'il existe $\xi \in F$ avec
$\mathfrak{a}'=\xi \mathfrak{a}$, $\mathfrak{b}'=\xi \mathfrak{b}$, un isomorphisme $f : L\simeq L'$ respectant les suites exactes définissant $L$ et $L'$, la polarisation, la structure de niveau $\pi$, et tel qu'il existe un automorphisme $\phi$ de $N^{-1} L/L$ induisant l'identité sur $N^{-1} \mathfrak{a}^* / \mathfrak{a}^*$, et la multiplication par $a \in (\mathbb{Z}/n\mathbb{Z})^\times$ sur $\mathfrak{b}'/\mathfrak{b}$, avec
 $$\beta' = f \circ \phi \circ \beta$$
\end{defi}

On notera $\overline{\mathcal{C}}$ la classe de la composante de $\mathcal{C}$. \\
Notons également
$$O_{\mathcal{C},1}^\times = \{ u \in O_F^\times | u \in (1+\mathfrak{b}\mathfrak{b}'^{-1}) \cap (1+ N \mathfrak{b}' \mathfrak{b}^{-1}) \}$$
$$O_{\overline{\mathcal{C}},1}^\times = \{ u \in O_F^\times | u \in ((\mathbb{Z}/n\mathbb{Z})^\times+\mathfrak{b}\mathfrak{b}'^{-1}) \cap (1+ N \mathfrak{b}' \mathfrak{b}^{-1}) \}$$
Soit $H_{\mathcal{C},1} = O_{\overline{\mathcal{C}},1}^\times / O_{\mathcal{C},1}^\times$. Nous utilisons ces notations pour être cohérent avec \cite{Di}. \\
Nous allons maintenant définir les cartes locales pour la pointe $\mathcal{C}$. Soit $P= \mathfrak{a} \mathfrak{b}''$ (noté $X$ dans \cite{Di}), $S=S_\mathcal{C} = \mathbb{G}_m \otimes P^*$. Soit $\Sigma^\mathcal{C}$ un éventail complet de $P^*_+$, l'ensemble des éléments totalement positifs de $P^*$. Soit $\sigma \in \Sigma^\mathcal{C}$. On peut lui associer des espaces $S_\sigma$, $\overline{S}_\sigma$ et $\overline{S}_\sigma^0$ (voir \cite{Di}) ; on notera $S \hookrightarrow S_{\Sigma^\mathcal{C}}$ l'immersion torique obtenue en recollant les immersions $S \hookrightarrow S_\sigma$, et $S_{\Sigma^\mathcal{C}}^\wedge$ la complétion de $S_{\Sigma^\mathcal{C}}$ le long de $S_{\Sigma^\mathcal{C}} \backslash S$. La construction de Mumford appliquée à $(P,\mathfrak{a},\mathfrak{b})$ donne alors un schéma semi-abélien $G_\sigma$ sur $\overline{S}_\sigma$, muni d'une action de $O_F$, et dont la restriction à $\overline{S}_\sigma^0$ est un SAHB muni d'une $\mathfrak{c}$-polarisation que l'on notera $G_\sigma^0$. De plus, pour tout idéal $\mathfrak{m}$, on a une suite exacte
$$0 \to (\mathfrak{a} / \mathfrak{m} \mathfrak{a}) (1) \to G_\sigma^0 [\mathfrak{m}] \to \mathfrak{m}^{-1} \mathfrak{b} / \mathfrak{b} \to 0$$
où $(1)$ désigne le dual de Cartier d'un schéma en groupes. \\
Nous allons maintenant associer à $G_\sigma^0$ des structures de niveau. La structure de niveau $\Gamma_1(N)$ a été faite dans \cite{Di}. Remarquons qu'obtenir cette structure de niveau nécessite d'uniformiser la pointe, et de se placer sur $\mathbb{Z}[\frac{1}{N},\zeta_\mathcal{C}]$, où $\zeta_{\mathcal{C}}$ est une racine de l'unité d'ordre $n$, l'exposant de $\mathfrak{b}'/\mathfrak{b}$. Décrivons comment obtenir la structure de niveau $\Gamma_0(\pi)$. Cela revient à se donner un sous-groupe $H_i^0$ de $G_\sigma^0[\pi_i]$ de rang $p^{f_i}$ pour tout $i$. \\
Si le sous-groupe $H_i$ relatif à la pointe $\mathcal{C}$ est égal à $\pi_i^{-1} \mathfrak{a}^* / \mathfrak{a}^*$, on définit $H_i^0$ comme l'image de $(\mathfrak{a} / \pi_i \mathfrak{a})(1)$ dans $G_\sigma^0[\pi_i]$. Sinon, $H_i$ est isomorphe à $\pi_i^{-1} \mathfrak{b} / \mathfrak{b} = \mathfrak{b}_i' / \mathfrak{b}$. La construction de Mumford appliquée à $(P,\mathfrak{a},\mathfrak{b}_i')$ donne un schéma semi-abélien $G_{\sigma,i}'$ sur $\overline{S}_\sigma$, muni d'une action de $O_F$, et dont la restriction à $\overline{S}_\sigma^0$ est un SAHB muni d'une $\mathfrak{c}_i'=\mathfrak{a} \mathfrak{b}_i'$-polarisation que l'on notera $G_{\sigma,i}^{'0}$. Par fonctorialité, on a une isogénie $G_{\sigma}^0 \to G_{\sigma,i}^{'0}$. On dispose alors d'une suite exacte
$$0 \to \mathfrak{b}_i' / \mathfrak{b} \to G_{\sigma}^0 [\pi_i] \to G_{\sigma,i}^{'0} [\pi_i] \to 0$$
On définit alors $H_i^0$ comme l'image de $\mathfrak{b}_i'/ \mathfrak{b}$ dans $G_{\sigma}^0 [\pi_i]$. \\
De plus, les variétés abéliennes données par la construction de Mumford étant ordinaires, le faisceau conormal de $G_{\sigma}^0$ est un $O_F \otimes_{\mathbb{Z}} \mathcal{O}_{\overline{S}_\sigma^0}$-module localement libre de rang $1$, donc est canoniquement filtré. Nous avons donc défini un point de $X_\mathfrak{c}$. Plus précisément, il existe un diagramme cartésien
\begin{displaymath}
\xymatrix{
G_{\sigma}^0 \times \text{Spec}(O_K[\zeta_\mathcal{C}]) \ar[r] & A \ar[d] \\
\overline{S}_\sigma^0 \times \text{Spec}(O_K[\zeta_\mathcal{C}]) \ar[r] & X_\mathfrak{c}
}
\end{displaymath}
où $A$ est le SAHB universel sur $X_\mathfrak{c}$. \\
Nous allons recoller les cartes locales avec $X_\mathfrak{c}$, de manière à obtenir une compactification de ce dernier espace.

\begin{defi}
Un éventail admissible $\Sigma=(\Sigma^\mathcal{C})_\mathcal{C}$ est la donnée pour chaque classe d'isomorphisme de $(R,N,\pi)$-composante $\mathcal{C}$ d'un éventail complet $\Sigma^\mathcal{C}$ de $P^*_+$, stable par $O_{\mathcal{C},1}^\times$, et ne contenant qu'un nombre fini d'élément modulo cette action.
\end{defi}

Voici l'analogue du principal théorème de \cite{Ra} et de \cite{Di}.

\begin{theo}
Soit $\Sigma=(\Sigma^\mathcal{C})_\mathcal{C}$ un éventail admissible. Alors il existe un Spec$(O_K)$-schéma $X_\mathfrak{c}^\Sigma$, une immersion ouverte $X_\mathfrak{c} \hookrightarrow X_\mathfrak{c}^\Sigma$, et un isomorphisme de schémas formels
$$ \coprod_\mathcal{C} (S_{\Sigma^\mathcal{C}}^\wedge / O_{\mathcal{C},1}^\times ) \times \text{Spec}(O_K[\zeta_\mathcal{C}]^{H_{\mathcal{C},1}}) \simeq X_\mathfrak{c}^{\Sigma \wedge}   $$
où $X_\mathfrak{c}^{\Sigma \wedge}$ désigne le complété formel de $X_\mathfrak{c}^\Sigma$ le long de sa partie à l'infini.
\end{theo}

\begin{proof}
La démonstration est analogue à celle du théorème $7.2$ de \cite{Di}. Celle-ci utilise le théorème de géométrie rigide démontré par Rapoport dans \cite{Ra} (théorème $3.5$). La vérification des hypothèses de ce théorème utilise la construction de Raynaud (voir dans \cite{Di} par exemple), qui décrit les variétés abéliennes sur $L$, un corps de fractions d'un anneau de valuation discrète, qui ont mauvaise réduction semi-stable déployée. \\
Décrivons rapidement les deux conditions à vérifier pour appliquer le théorème de Rapoport. La première demande d'étudier les $L$-points de $\overline{S}_{\sigma_1}^0$ et $\overline{S}_{\sigma_2}^0$ qui donnent la même variété abélienne $A$ sur $L$, où $\sigma_j$ est un cône d'une certaine composante $\mathcal{C}_j$. La description de Raynaud, ainsi que les structures de niveau sur $A$ montrent que les deux composantes $\mathcal{C}_1$ et $\mathcal{C}_2$ sont isomorphes, et que les cônes $\sigma_1$ et $\sigma_2$ sont d'intersection non vide. Cela permet de vérifier le premier point. \\
Pour la seconde hypothèse, il faut vérifier que si on a des morphismes $f : $ Spec $L \to \overline{S}_\sigma^0$, et $g :$ Spec $ R \to X_\mathfrak{c}$ ($R$ est l'anneau des entiers de $L$), compatible par le morphisme Spec $L \to $ Spec $V$, alors on peut étendre $f$ à Spec $R$. Or ces morphismes donnent des SAHB $A$ sur $L$, et $A'$ sur $R$ tels que $A = A' \otimes_R L$. Le morphisme $f$ donne une composante $\mathcal{C}$, et la construction de Mumford permet de décrire la variété abélienne $A$. la description de Raynaud permet quant à elle de décrire la variété abélienne $A'$. La compatiblité entre la construction de Mumford et celle de Raynaud permettent alors d'identifier ces deux descriptions, et d'étendre le morphisme $f$ à Spec $R$.
\end{proof}

Nous noterons $\overline{X_\mathfrak{c}} = X_\mathfrak{c}^\Sigma$, même si la construction de cet espace dépend d'un choix de $\Sigma$. Enonçons quelques propriétés de cet espace.

\begin{prop}
Il existe un unique schéma en groupes semi-abélien $\mathcal{A} \to \overline{X_\mathfrak{c}}$ qui étende le SAHB universel $A$ sur $X_\mathfrak{c}$. Il est muni d'une action de $O_F$, et c'est un tore sur $\overline{X_\mathfrak{c}} \backslash X_\mathfrak{c}$.
\end{prop}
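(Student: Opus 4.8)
Le plan est de construire $\mathcal{A}$ au voisinage du bord gr\^ace \`a la construction de Mumford, puis de recoller et d'alg\'ebriser. Sur la partie ouverte $X_\mathfrak{c}$, on dispose d\'ej\`a du SAHB universel $A$. Au voisinage de chaque pointe $\mathcal{C}$, le th\'eor\`eme pr\'ec\'edent identifie le compl\'et\'e formel $X_\mathfrak{c}^{\Sigma \wedge}$ \`a un recollement des $S_{\Sigma^\mathcal{C}}^\wedge / O_{\mathcal{C},1}^\times$, et sur chaque c\^one $\sigma \in \Sigma^\mathcal{C}$ la construction de Mumford appliqu\'ee \`a $(P,\mathfrak{a},\mathfrak{b})$ fournit un sch\'ema semi-ab\'elien $G_\sigma$ sur $\overline{S}_\sigma$, dont la restriction \`a l'int\'erieur $\overline{S}_\sigma^0$ est le SAHB $G_\sigma^0$ qui, d'apr\`es le diagramme cart\'esien pr\'ec\'edent, est le tir\'e en arri\`ere de $A$.

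Je commencerais par v\'erifier que les $G_\sigma$ se recollent. Sur les intersections de c\^ones, les constructions de Mumford sont canoniquement isomorphes, et l'action de $O_{\mathcal{C},1}^\times$ sur l'\'eventail se rel\`eve en une action sur le sch\'ema semi-ab\'elien, la construction \'etant fonctorielle en les donn\'ees d'entr\'ee $(P,\mathfrak{a},\mathfrak{b})$, qui sont des $O_F$-modules. On obtient ainsi par descente un sch\'ema semi-ab\'elien formel sur $X_\mathfrak{c}^{\Sigma \wedge}$. Par le th\'eor\`eme d'existence de Grothendieck, ce sch\'ema formel s'alg\'ebrise en un sch\'ema semi-ab\'elien sur un voisinage du bord, qui co\"incide avec $A$ sur son intersection avec $X_\mathfrak{c}$. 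Comme $A$ est d\'efini sur $X_\mathfrak{c}$ tout entier, on recolle ces deux objets en un sch\'ema semi-ab\'elien global $\mathcal{A} \to \overline{X_\mathfrak{c}}$ prolongeant $A$ ; l'action de $O_F$ sur $A$ s'\'etend de m\^eme par fonctorialit\'e de la construction de Mumford.

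Pour le caract\`ere de tore au bord, j'utiliserais que les pointes de la vari\'et\'e de Hilbert sont totalement d\'eg\'en\'er\'ees : dans la construction de Mumford pour $(P,\mathfrak{a},\mathfrak{b})$, la partie torique $\mathbb{G}_m \otimes \mathfrak{a}^*$ est de rang maximal $d$ et la partie ab\'elienne est triviale, les p\'eriodes provenant de $\mathfrak{b}$. Le long du bord $S_{\Sigma^\mathcal{C}} \backslash S$, le sch\'ema $G_\sigma$ est donc un tore de rang $d$ muni de son action de $O_F$ via $\mathfrak{a}^*$ ; par recollement, $\mathcal{A}$ est un tore au-dessus de $\overline{X_\mathfrak{c}} \backslash X_\mathfrak{c}$.

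Enfin, l'unicit\'e r\'esulte de la rigidit\'e des sch\'emas semi-ab\'eliens. L'espace $\overline{X_\mathfrak{c}}$ \'etant normal et $X_\mathfrak{c}$ y \'etant un ouvert dense, deux prolongements semi-ab\'eliens de $A$ sont canoniquement isomorphes : l'identit\'e de $A$, vue comme morphisme entre les restrictions des deux prolongements \`a l'ouvert dense, s'\'etend de mani\`ere unique en un isomorphisme entre les deux prolongements, par la propri\'et\'e d'extension des morphismes de sch\'emas semi-ab\'eliens sur une base normale (suivant la th\'eorie de Faltings-Chai). Je m'attends \`a ce que l'obstacle principal soit l'\'etape de recollement et d'alg\'ebrisation : il faut contr\^oler la compatibilit\'e des constructions de Mumford locales le long des faces communes des c\^ones ainsi que leur \'equivariance sous l'action du groupe d'unit\'es, ce qui constitue le point technique central de la th\'eorie, d\'ej\`a trait\'e par Rapoport et Dimitrov dans leurs constructions de compactifications.
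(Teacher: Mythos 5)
Votre argument d'existence et l'\'enonc\'e sur le tore au bord suivent essentiellement la construction du papier (les $G_\sigma$ de la construction de Mumford, totalement d\'eg\'en\'er\'ee pour les pointes de Hilbert, fournissent le prolongement et sa structure de tore au bord) ; c'est l\`a que le papier se contente de dire que l'existence \og d\'ecoule de la construction \fg. Le point probl\'ematique est votre preuve d'unicit\'e : vous affirmez que $\overline{X_\mathfrak{c}}$ est normal pour appliquer la rigidit\'e de Faltings--Chai, mais cette normalit\'e n'est ni \'etablie ni m\^eme revendiqu\'ee dans le papier, et elle est douteuse. Le sch\'ema $X_\mathfrak{c}$ n'est pas le mod\`ele de Deligne--Pappas $Y_\mathfrak{c}$ : il en diff\`ere par un \'eclatement centr\'e dans la fibre sp\'eciale (hors du lieu de Rapoport), et le papier signale explicitement qu'il n'est pas particuli\`erement r\'egulier (une condition de filtration suppl\'ementaire sur $\omega_H$ aurait donn\'e un espace \og plus r\'egulier \fg). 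Un \'eclatement d'un sch\'ema normal n'a aucune raison d'\^etre normal, et votre argument s'effondre si $\overline{X_\mathfrak{c}}$ ne l'est pas.

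La correction est exactement le d\'etour que fait le papier : on remarque d'abord que le bord $\overline{X_\mathfrak{c}} \backslash X_\mathfrak{c}$ est contenu dans le lieu de Rapoport (le sch\'ema semi-ab\'elien y est un tore, donc son faisceau conormal est $O_F \otimes \mathcal{O}$-libre de rang $1$). Deux prolongements semi-ab\'eliens de $A$ co\"incident tautologiquement sur $X_\mathfrak{c}$, et il suffit donc de comparer leurs restrictions \`a l'ouvert de Rapoport $\overline{X_\mathfrak{c}}^R$, qui contient le bord. Or $\overline{X_\mathfrak{c}}^R$ est lisse sur $O_K$, donc normal, et $X_\mathfrak{c}^R$ y est un ouvert dense ; la proposition $2.7$ de Faltings--Chai s'applique alors et donne l'unicit\'e. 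Autrement dit, votre sch\'ema de preuve est le bon, mais la normalit\'e doit \^etre obtenue en se restreignant au lieu de Rapoport, et non pas affirm\'ee pour $\overline{X_\mathfrak{c}}$ tout entier.
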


\begin{proof}
L'existence de $\mathcal{A}$ découle de la construction, puisque nous avons construit un schéma semi-abélien $G_\sigma \times$Spec$(O_K[\zeta_\mathcal{C}])$ sur $\overline{S}_\sigma \times$Spec$(O_K[\zeta_\mathcal{C}])$ pour toute pointe $\mathcal{C}$. \\
Pour démontrer l'unicité, remarquons tout d'abord que le bord $\overline{X_\mathfrak{c}} \backslash X_\mathfrak{c}$ est inclus dans le lieu de Rapoport. Il suffit donc de démontrer l'unicité du prolongement de $X_\mathfrak{c}^R$ à $\overline{X_\mathfrak{c}}^R$, où $X_\mathfrak{c}^R$ est l'ouvert de Rapoport de $X_\mathfrak{c}$, et de même pour $\overline{X_\mathfrak{c}}$. Le schéma $\overline{X_\mathfrak{c}}^R$ est lisse donc normal, et $X_\mathfrak{c}^R$ est un ouvert dense de cet espace. On peut alors appliquer la proposition $2.7$ de \cite{F-C}  pour conclure.
\end{proof}

\begin{prop}
Le schéma $\overline{X_\mathfrak{c}}$ est propre sur $O_K$.
\end{prop}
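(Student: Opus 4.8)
Le plan est d'utiliser le crit\`ere valuatif de propret\'e. Je commencerais par observer que $\overline{X_\mathfrak{c}}$ est s\'epar\'e et de type fini sur le sch\'ema noeth\'erien $\text{Spec } O_K$, ces propri\'et\'es d\'ecoulant de la construction par recollement d'un nombre fini de cartes locales avec $X_\mathfrak{c}$ (analogue du th\'eor\`eme $7.2$ de \cite{Di}). Il suffira alors de v\'erifier que, pour tout anneau de valuation discr\`ete $R$ de corps des fractions $L$ et tout morphisme $\text{Spec } L \to \overline{X_\mathfrak{c}}$ au-dessus de $O_K$, il existe une extension finie $L'/L$, un anneau de valuation discr\`ete $R'$ dominant $R$ dans $L'$, et un prolongement $\text{Spec } R' \to \overline{X_\mathfrak{c}}$ ; la s\'eparation fournira alors l'unicit\'e d'un tel prolongement, donc la propret\'e.

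Ensuite, je distinguerais selon que le $L$-point se factorise par le bord ou par $X_\mathfrak{c}$. S'il se factorise par une carte $\overline{S}_\sigma$ relative \`a un c\^one d'un \'eventail complet $\Sigma^\mathcal{C}$, le prolongement sera imm\'ediat, car l'immersion torique $S \hookrightarrow S_{\Sigma^\mathcal{C}}$ associ\'ee \`a un \'eventail complet est propre. On pourra donc supposer que le point provient de $X_\mathfrak{c}$, ce qui fournit un SAHB $A_L$ sur $L$ muni de ses structures $(i,\phi,H)$ et de la filtration canonique de son faisceau conormal. Quitte \`a agrandir $L$, le th\'eor\`eme de r\'eduction semi-stable (voir \cite{F-C}) assurera que le mod\`ele de N\'eron $\mathcal{A}'$ de $A_{L'}$ sur $R'$ est un sch\'ema semi-ab\'elien, muni par fonctorialit\'e d'une action de $O_F$. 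Dans le cas de bonne r\'eduction, c'est-\`a-dire lorsque $\mathcal{A}'$ est ab\'elien, la propri\'et\'e universelle du mod\`ele de N\'eron prolongera la polarisation $\phi$ et la structure de niveau $\mu_N$ (dont le niveau est premier \`a la caract\'eristique r\'esiduelle), tandis que le sous-groupe $H$ se prolongera par adh\'erence sch\'ematique dans $\mathcal{A}'[\pi]$ et la filtration de $\omega_{\mathcal{A}'}$ par normalit\'e ; on obtiendra ainsi un $R'$-point de $X_\mathfrak{c}$.

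Le cas de mauvaise r\'eduction constituera la difficult\'e principale. Lorsque $\mathcal{A}'$ poss\`ede une partie torique non triviale, il faudra utiliser la description de Raynaud des vari\'et\'es ab\'eliennes \`a r\'eduction semi-stable d\'eploy\'ee pour associer \`a cette d\'eg\'en\'erescence une donn\'ee combinatoire $(\mathfrak{a},\mathfrak{b},L,\lambda,\beta,H)$, soit une composante $\mathcal{C}$ et un c\^one $\sigma \in \Sigma^\mathcal{C}$, puis montrer, gr\^ace \`a la compatibilit\'e entre les constructions de Raynaud et de Mumford d\'ej\`a invoqu\'ee dans la d\'emonstration du th\'eor\`eme de compactification, que $\mathcal{A}'$ s'identifie \`a l'image r\'eciproque de $G_\sigma$ par un morphisme $\text{Spec } R' \to \overline{S}_\sigma$ prolongeant le point g\'en\'erique. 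Le point d\'elicat sera de v\'erifier que toutes les structures de niveau, en particulier le sous-groupe $H$ et la filtration canonique du faisceau conormal, se prolongent de fa\c{c}on compatible \`a travers cette d\'eg\'en\'erescence, de sorte que l'on obtienne bien un $R'$-point de $\overline{X_\mathfrak{c}}$ dont le point ferm\'e appartient au bord. Une fois ces deux cas trait\'es, le crit\`ere valuatif sera v\'erifi\'e et la propret\'e en r\'esultera.
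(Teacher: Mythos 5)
Votre plan g\'en\'eral (crit\`ere valuatif avec extensions d'anneaux de valuation discr\`ete, dichotomie bonne/mauvaise r\'eduction, compatibilit\'e entre les constructions de Raynaud et de Mumford) est celui de l'article, mais votre traitement des points du bord repose sur une affirmation fausse : vous dites que le prolongement est imm\'ediat parce que l'immersion torique $S \hookrightarrow S_{\Sigma^\mathcal{C}}$ associ\'ee \`a un \'eventail complet est propre. D'une part, une immersion ouverte dense et propre serait un isomorphisme ; d'autre part, et c'est le point essentiel, $\Sigma^\mathcal{C}$ est un \'eventail complet \emph{du c\^one} $P^*_+$ des \'el\'ements totalement positifs, et non de tout $P^* \otimes \mathbb{R}$. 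La vari\'et\'e torique $S_{\Sigma^\mathcal{C}}$ n'est donc qu'une compactification partielle de $S$ dans les directions positives ; elle n'est pas propre sur la base, et cette \'etape \'echoue. Le rem\`ede --- c'est la premi\`ere r\'eduction de la d\'emonstration de l'article --- est de ne jamais consid\'erer les points du bord : $X_\mathfrak{c}$ \'etant un ouvert dense de $\overline{X_\mathfrak{c}}$, et puisque vous autorisez de toute fa\c{c}on les extensions d'anneaux de valuation, la partie existence du crit\`ere valuatif se teste uniquement sur les morphismes Spec $L \to X_\mathfrak{c}$.

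Par ailleurs, dans le cas de mauvaise r\'eduction, vous nommez la bonne strat\'egie mais laissez en suspens le point d\'elicat, qui est pr\'ecis\'ement l\`a o\`u se trouve le contenu de la preuve. Le m\'ecanisme de l'article est le suivant : la description de Raynaud fournit, outre les id\'eaux $\mathfrak{a}$ et $\mathfrak{b}$ avec $\mathfrak{c}=\mathfrak{a}\mathfrak{b}^{-1}$ et la $(R,N,\pi)$-composante $\mathcal{C}$ d\'etermin\'ee par les structures de niveau, un \'el\'ement $\xi^* \in (\mathfrak{a}\mathfrak{b})^*_+$ ; la compl\'etude de l'\'eventail $\Sigma^\mathcal{C}$ sur $P^*_+$ garantit qu'un translat\'e de $\xi^*$ sous $O_{\mathcal{C},1}^\times$ appartient \`a un c\^one $\sigma \in \Sigma^\mathcal{C}$, et c'est exactement cette appartenance qui assure que le morphisme Spec $L \to \overline{S}_\sigma^0 \hookrightarrow \overline{S}_\sigma$ se prolonge en Spec $R \to \overline{S}_\sigma$, le crit\`ere valuatif pour une carte torique se traduisant par l'appartenance de la valuation du point au c\^one. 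Sans produire $\xi^*$ et sans invoquer cette compl\'etude, le prolongement dans la carte que vous annoncez n'est pas justifi\'e. Votre cas de bonne r\'eduction, en revanche, est correct et co\"{\i}ncide avec celui de l'article (mod\`ele ab\'elien sur $R$, prolongement de $\phi$ et de $\mu_N$, adh\'erence sch\'ematique pour $H$, saturation pour la filtration de $\omega_A$).
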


\begin{proof}
Il suffit de vérifier le critère valuatif de propreté. Soit $R$ un anneau de valuation discrète de corps des fractions $L$. Comme $X_\mathfrak{c}$ est un ouvert dense de $\overline{X_\mathfrak{c}}$, il suffit de montrer que tout morphisme $f : $ Spec $L \to X_\mathfrak{c}$ peut se prolonger en un morphisme Spec $R \to \overline{X_\mathfrak{c}}$. \\
Si la variété abélienne $A$ sur $L$ donnée par $f$ est à bonne réduction sur $R$, il existe une variété abélienne $A_0$ sur $R$ telle que $A=A_0 \otimes_R L$. La polarisation $\phi$, et les structures de niveau $\mu_N$ et $\Gamma_0(\pi)$ pour $A$ donnent une polarisation et des structures de niveau pour $A_0$. De plus, la filtration de $\omega_A$, qui est un $L$-espace vectoriel, donne une filtration pour $\omega_{A_0}$. Il suffit en effet de prendre l'image inverse de la filtration de $\omega_A$ par le morphisme $\omega_{A_0} \to \omega_{A_0} \otimes_R L = \omega_A$. On peut donc définir un morphisme Spec $R \to X_\mathfrak{c}$ qui étend $f$. \\
Supposons maintenant que la variété abélienne $A$ a mauvaise réduction. La théorie de géomètrie rigide de Raynaud (voir \cite{Di} par exemple) fournit alors deux idéaux $\mathfrak{a}$ et $\mathfrak{b}$ tels que $\mathfrak{c}=\mathfrak{a} \mathfrak{b}^{-1}$, et $A_{rig} = (\mathbb{G}_m \otimes \mathfrak{a}^*)_{rig} / \mathfrak{b}_{rig}$. Les structures de niveau pour $A$ définissent alors une $(R,N,\pi)$-composante $\mathcal{C}$. La description de Raynaud fournit en plus un élément $\xi^* \in (\mathfrak{a} \mathfrak{b})^*_+$. Un translaté de $\xi^*$ par $O_{\mathcal{C},1}^\times$ appartient à un cône $\sigma \in \Sigma^\mathcal{C}$ utilisé pour la construction de $\overline{X_c}$. Le morphisme $f$ se factorise donc par la carte locale $\overline{S}_\sigma^0 \times \text{Spec}(O_K[\zeta_\mathcal{C}]) \to X_\mathfrak{c}$. Le morphisme Spec $L \to~\overline{S}_\sigma^0 \times~\text{Spec}(O_K[\zeta_\mathcal{C}]) \hookrightarrow~\overline{S}_\sigma \times~\text{Spec}(O_K[\zeta_\mathcal{C}])$ s'étend alors nécessairement en un morphisme Spec $R \to \overline{S}_\sigma \times~\text{Spec}(O_K[\zeta_\mathcal{C}])$. Le morphisme
$$g : \text{Spec } R \to \overline{S}_\sigma \times \text{Spec}(O_K[\zeta_\mathcal{C}]) \to \overline{X_\mathfrak{c}}$$
étend alors le morphisme $f$.
\end{proof}

Nous noterons $\overline{X} = \coprod_i \overline{X_{\mathfrak{c}_i}}$ ; c'est encore un schéma propre sur $O_K$.

\subsection{Principe de Koecher}

Comme il existe un schéma semi-abélien $\mathcal{A}$ sur $\overline{X}$, le faisceau $\omega_A$ se prolonge en $\omega_\mathcal{A}$ sur $\overline{X}$. De plus, $\mathcal{A}$ est un tore sur le bord $\overline{X} \backslash X$ ; ce dernier espace est donc dans le lieu de Rapoport. 

\begin{prop}
Le faisceau $\omega^\kappa$ se prolonge sur $\overline{X}$.
\end{prop}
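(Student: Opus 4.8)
Le plan est de prolonger $\omega^\kappa$ par recollement : on le conserve sur $X$, on en donne une d\'efinition canonique sur un voisinage ouvert du bord, puis on v\'erifie que les deux d\'efinitions co\"incident sur leur intersection. D'apr\`es la proposition pr\'ec\'edente, le sch\'ema semi-ab\'elien $\mathcal{A} \to \overline{X}$ prolonge le SAHB universel et porte une action de $O_F$ ; son faisceau conormal $\omega_\mathcal{A}$ prolonge donc $\omega_A$ et se d\'ecompose, comme dans la partie $\ref{varhilbert}$, en $\omega_\mathcal{A} = \bigoplus_{i=1}^g \bigoplus_{\sigma \in S_i} \omega_{\mathcal{A},\sigma}$, chaque $\omega_{\mathcal{A},\sigma}$ \'etant localement libre de rang $e_i$ et muni d'une action de $O_K[u]/E_\sigma(u)$.

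Je commencerais par observer que le bord $\overline{X} \backslash X$ est contenu dans le lieu de Rapoport : en effet $\mathcal{A}$ est un tore sur le bord, et les vari\'et\'es issues de la construction de Mumford \'etant ordinaires, $\omega_\mathcal{A}$ y est un $O_F \otimes_\mathbb{Z} \mathcal{O}$-module localement libre de rang $1$. Notons $\overline{X}^R$ l'ouvert de Rapoport de $\overline{X}$, c'est-\`a-dire le lieu o\`u $\omega_\mathcal{A}$ est localement libre de rang $1$ sur $O_F \otimes_\mathbb{Z} \mathcal{O}_{\overline{X}}$. Cet ouvert contient le bord, de sorte que $\{ X, \overline{X}^R \}$ forme un recouvrement ouvert de $\overline{X}$, dont l'intersection est le lieu de Rapoport $X^R$ de $X$. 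Sur $\overline{X}^R$, qui est lisse donc plat sur $O_K$, chaque $\omega_{\mathcal{A},\sigma}$ est libre de rang $1$ sur $(O_K[u]/E_\sigma(u)) \otimes_{O_K} \mathcal{O}$ ; comme dans \cite{Sa2}, il existe alors une unique filtration $0 = \omega_{\mathcal{A},\sigma,0} \subset \cdots \subset \omega_{\mathcal{A},\sigma,e_i} = \omega_{\mathcal{A},\sigma}$ par facteurs directs $O_F$-stables dont le $j$-i\`eme quotient gradu\'e est un fibr\'e en droites sur lequel $O_F$ agit via $\sigma_j$. Je poserais alors sur $\overline{X}^R$
$$\omega^\kappa = \bigotimes_{i=1}^g \bigotimes_{\sigma \in S_i} \bigotimes_{j=1}^{e_i} (\omega_{\mathcal{A},\sigma,j} / \omega_{\mathcal{A},\sigma,j-1})^{k_{\sigma_j}}.$$

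L'obstacle principal, et le seul point v\'eritablement \`a v\'erifier, sera la compatibilit\'e de cette d\'efinition avec celle d\'ej\`a donn\'ee sur $X$, le long de $X^R = X \cap \overline{X}^R$. Or au-dessus du lieu de Rapoport le morphisme d'oubli $X \to Y$ est un isomorphisme, ce qui signifie pr\'ecis\'ement que la filtration $(\omega_{A,\sigma,j})$ intervenant dans la donn\'ee de module est l'unique filtration de $\omega_A$ satisfaisant les conditions requises ; elle est donc canonique, enti\`erement d\'etermin\'ee par $\omega_A$ vu comme $O_F \otimes \mathcal{O}$-module. Puisque la restriction de $\omega_\mathcal{A}$ \`a $X^R$ n'est autre que $\omega_A$, l'unicit\'e force la filtration canonique construite sur $\overline{X}^R$ \`a induire sur $X^R$ la filtration de module ; les fibr\'es en droites gradu\'es, et par cons\'equent les deux d\'efinitions de $\omega^\kappa$, co\"incident sur $X^R$. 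Par recollement sur le recouvrement $\{ X, \overline{X}^R \}$, on obtient un faisceau inversible sur $\overline{X}$ qui prolonge $\omega^\kappa$, ce qui ach\`eve la d\'emonstration.
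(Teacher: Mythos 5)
Your proof is correct and takes essentially the same route as the paper: the paper gives no separate argument for this proposition, but the sentences preceding it contain exactly your ingredients, namely that the semi-abelian scheme $\mathcal{A}$ extends $\omega_A$ to $\omega_{\mathcal{A}}$ and that the boundary $\overline{X} \backslash X$ lies in the Rapoport locus, where (the abelian schemes from Mumford's construction being ordinary) the filtration, hence $\omega^\kappa$, is canonical. Your explicit recollement over the cover $\{X, \overline{X}^R\}$, using uniqueness of the filtration on $X^R$ to match the two definitions, merely spells out what the paper leaves implicit.
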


Comme le degré de $F$ est supérieur ou égal à $2$, on dispose du principe de Koecher. Nous nous inspirons de la démonstration de \cite{Di}, théorème $8.3$.

\begin{theo}
Pour tout $O_K$-algèbre $R$, on a 
$$H^0(X\times\text{Spec } R, \omega^\kappa) = H^0(\overline{X} \times \text{Spec } R, \omega^\kappa)$$
\end{theo}

\begin{proof}
Soit $f \in H^0(X\times$Spec $R, \omega^\kappa)$ ; nous voulons montrer que $f$ peut s'étendre en un élément de $H^0(\overline{X} \times$Spec $R, \omega^\kappa)$. Il suffit de montrer que $f$ peut s'étendre aux pointes de $X_\mathfrak{c}$, pour tout $\mathfrak{c}$. Soit donc $\mathcal{C}$ une telle pointe, et $P$ l'idéal fractionnaire associé. La forme modulaire $f$ possède un $q$-développement méromorphe le long de $S_{\Sigma^\mathcal{C}}^\wedge$ : 
$$f = \sum_{\xi \in P} a_\xi q^\xi$$
avec $a_\xi \in R$. Il n'existe qu'un nombre fini de $\xi \notin P_+$ avec $a_\xi \neq 0$, et pour tout $u \in O_{\mathcal{C},1}^\times$, on a $a_\xi =0 \Leftrightarrow a_{u^2 \xi} = 0$. \\
Supposons qu'il existe $\xi_0 \notin (P_+ \cup \{0\})$, avec $a_{\xi_0} \neq 0$. Alors $a_{u^2 \xi_0}$ est également non nul pour $u \in O_{\mathcal{C},1}^\times$. Or $u^2 \xi_0$ n'appartient pas à $P_+ \cup \{0\}$ si $u \in O_{\mathcal{C},1}^\times$, et comme $d \geq 2$ et $\xi_0 \neq 0$, l'ensemble $\{u^2 \xi_0, u \in O_{\mathcal{C},1}^\times \}$ est infini d'après le théorème des unités de Dirichlet. On obtient donc une contradiction. \\ 
La forme $f$ a donc un prolongement holomorphe en la pointe $\mathcal{C}$.
\end{proof}

En appliquant ce résultat à $R=O_K/p^n$ pour tout $n$, en prenant la limite, puis en tensorisant par $K$, on obtient que
$$H^0(X_{rig},\omega^\kappa) = H^0(\overline{X}_{rig},\omega^\kappa)$$
où $\overline{X}_{rig}$ est la fibre générique rigide de la complétion formelle de $\overline{X}$ le long de sa fibre spéciale.
De plus, le schéma $\overline{X}$ étant propre sur $O_K$, on a par GAGA (voir \cite{EGA} partie $5.1$)
$$H^0(\overline{X} \times \text{Spec } K, \omega^\kappa) = H^0(\overline{X}_{rig}, \omega^\kappa)$$

En résumé, nous avons le résultat suivant.

\begin{theo}
L'espace des formes modulaires, $H^0(X_K, \omega^\kappa)$, est égal à $H^0(X_{rig},\omega^\kappa)$.
\end{theo}

\backmatter

\end{document}